\numberwithin{equation}{section}
\newcommand{\Cc}{\mathcal{C}}
\newcommand{\Fc}{\mathcal{F}}
\newcommand{\C}{\mathbb{C}}
\newcommand{\D}{\mathbb{D}}
\newcommand{\E}{\mathbb{E}}
\newcommand{\N}{\mathbb{N}}
\newcommand{\Pb}{\mathbb{P}}
\newcommand{\Q}{\mathbb{Q}}
\newcommand{\R}{\mathbb{R}}
\newcommand{\Hg}{\mathfrak{H}}
\newcommand{\Sf}{\mathscr{S}}
\newcommand{\Ef}{\mathscr{E}}
\newcommand{\ud}{\mathrm{d}}
\newcommand{\Norm}[1]{\left\lVert#1\right\rVert}
\newcommand{\Abs}[1]{\left|#1\right|}
\newcommand{\Ip}[1]{\left\langle #1 \right\rangle}
\newcommand{\Ipn}[1]{\langle #1 \rangle}
\newcommand{\Ipm}[1]{\big\langle #1 \big\rangle}
\newcommand{\Indi}[1]{\mathbbm{1}_{#1}}
\newcounter{dummy} \numberwithin{dummy}{section}
\newtheorem{Definition}[dummy]{Definition}
\newtheorem{Proposition}[dummy]{Proposition}
\newtheorem{Theorem}[dummy]{Theorem}
\newtheorem{Lemma}[dummy]{Lemma}
\def\1{{\rm l}\hskip -0.21truecm 1}
\begin{document}
\title[]{Convergence of the empirical spectral distribution of Gaussian matrix-valued processes}
 \date{\today}

\author[]{Arturo Jaramillo, Juan Carlos Pardo \and Jos\'e Luis P\'erez}
\address{Arturo Jaramillo: Department of Mathematics, University of Kansas,   Lawrence, KS 66045, USA.}
\email{jagil@ku.edu}
\address{Juan Carlos Pardo: Centro de Investigaci\'on en Matem\'aticas A.C., Calle Jalisco s/n, CP 36240, Guanajuato, Mexico.}
\email{jcpardo@cimat.mx}

\address{Jos\'e Luis P\'erez: Centro de Investigaci\'on en Matem\'aticas A.C., Calle Jalisco s/n, CP 36240, Guanajuato, Mexico.}
\email{jluis.garmendia@cimat.mx}
\date{\today}
\keywords{Gaussian matrix-valued processes, Skorokhod integral, measure valued process, free probability.}
 
\begin{abstract}
For a given normalized Gaussian symmetric matrix-valued process $Y^{(n)}$, we consider the process of its eigenvalues
$\{(\lambda_{1}^{(n)}(t),\dots, \lambda_{n}^{(n)}(t)); t\ge 0\}$  as well as its corresponding process of empirical spectral measures $\mu^{(n)}=(\mu_{t}^{(n)}; t\geq0)$.
Under some mild conditions on the covariance function associated to $Y^{(n)}$, we prove that the process $\mu^{(n)}$ converges in probability to a deterministic limit $\mu$, in the topology of uniform convergence over compact sets. We show that the process $\mu$ is characterized by its Cauchy transform, which is a rescaling of the solution of a Burgers' equation. Our results extend those of  Rogers and Shi \cite{RoSh} for the free Brownian motion  and Pardo et al. \cite{PaGaPe} for the non-commutative fractional Brownian motion when $H>1/2$    whose  arguments use strongly the non-collision of the eigenvalues. Our methodology does not require the latter property and in particular explains the remaining case of the non-commutative fractional Brownian motion for $H< 1/2$ which, up to our knowledge,  was unknown. 
\end{abstract}

\maketitle

\section{Introduction}

Let us consider a family of independent centered Gaussian processes $\{X_{i,j}; i,j\in\N\}$ defined in a probability space $(\Omega, \Fc, \Pb)$, with common covariance function here denoted by $R(s,t)$, for $s,t\ge 0$. That is to say, the Gaussian processes  $X_{i,j}:=(X_{i,j}(t); t\geq0)$ are independent  with zero mean and covariance given by 
\begin{align*}
\E\left[X_{i,j}(s)X_{i,j}(t)\right]
  &=R(s,t),\qquad \textrm{for} \qquad s, t\ge 0, 
\end{align*}
where $R(s,t)$ is a  non-negative definite covariance function.  For $n\in \mathbb{N}$, we also consider the renormalized symmetric Gaussian matrix-valued  process  $Y^{(n)}(t):=[Y_{i,j}^{(n)}(t)]_{1\leq i,j\leq n}$, for $t\ge 0$,  defined as follows
$$Y_{i,j}^{(n)}(t)
  :=\left\{\begin{array}{cc}
	\displaystyle\frac{1}{\sqrt{n}}X_{i,j}(t) +A_{i,j}^{(n)}&\ \ \text{ if }\ i<j,\\
	\displaystyle\frac{\sqrt{2}}{\sqrt{n}}X_{i,i}(t) + A_{i,i}^{(n)}&\ \ \text{ if }\ i=j,
	\end{array}\right.$$ 
where the $A_{i,j}^{(n)}$ are the coefficients of a deterministic symmetric  matrix $A^{n}=[A_{i,j}^n]_{1\leq i,j\leq n}$. Let us denote the  $n$-dimensional process of eigenvalues of $Y^{(n)}$ by  
$(\lambda_{1}^{(n)}(t), \cdots, \lambda_{n}^{(n)}(t))$, for $t\ge 0$. We also denote by $\mathtt{Pr}(\R)$ for the space of probability measures on $\mathbb{R}$ endowed with the topology of weak convergence and let $\mathcal{C}(\mathbb{R}_+, \mathtt{Pr}(\R))$ be the space of continuous functions from $\mathbb{R}_+$ into $\mathtt{Pr}(\R)$, endowed with the topology of uniform convergence on compact intervals of $\mathbb{R}_+$. 

In this manuscript, we are interested in the asymptotic behaviour of the $\mathtt{Pr}(\R)$-valued process of empirical distributions $\{\mu^{(n)}; n\geq1\}$, defined by $\mu^{(n)}:=(\mu_{t}^{(n)}, t\geq0)$ where
\begin{align*}
\mu_{t}^{(n)}
  &:=\frac{1}{n}\sum_{j=1}^{n}\delta_{\lambda_{j}^{(n)}(t)},\ \ \ \ t\geq0,
\end{align*}
and $\delta_{x}$ denotes the Dirac measure centered at $x$. In particular, we aim to determine the limit in probability of the process $\mu^{(n)}$, viewed as an element of the space $\Cc(\mathbb{R}_+, \mathtt{Pr}(\R))$ of $\mathtt{Pr}(\R)$-valued stochastic processes with continuous trajectories.

This problem has been  studied before in the framework of interacting particles by Rogers and Shi \cite{RoSh} and  C\'epa and L\'epingle in \cite{CeLe}, when  the $X_{i,j}$'s are standard Brownian motions. We also refer to Cabanal-Duvillard  and Guionnet \cite{CaGu} for the case of Hermitian Brownian motion where the latter case is included.
The authors in \cite{CeLe, RoSh} proved that $\mu^{(n)}$ converges, as $n$ tends to infinity, to a deterministic process whose Cauchy transforms are given by the solution of a Burgers' equation. More recently, Pardo et al. \cite{PaGaPe}  extended the previous result to the case where the $X_{i,j}$'s are fractional Brownian motions with Hurst (or self-similar) index $H>1/2$. We briefly describe the main ideas presented in all these manuscripts where the no-collision of the eigenvalues is crucial. Let $\Cc^{r}(\R)$ denote the set of real-valued functions with continuous derivatives of order $r$, and let us introduce  the subset
\begin{align}\label{eq:Cb2R}
\mathcal{C}_{b}^{r}(\R):=\bigg\{f\in \Cc^{r}(\R)\bigg|\ \sum_{i=1}^{r}\sup_{x\in\R}\Abs{f^{(r)}(x)}<\infty\bigg\}.
\end{align}
In the  Brownian case, the main idea for determining the asymptotic behaviour of $\mu^{(n)}$ consists, first, in characterizing the process of its eigenvalues $(\lambda_{1}^{(n)},\dots, \lambda_{n}^{(n)})$ as the unique strong solution of a system of stochastic differential equations. Then one can prove that for every $f\in \Cc_{b}^3(\R)$, the process 
\begin{equation}\label{aux_1}
\langle\mu^{(n)}_t,f\rangle:=\int f(x)\mu^{(n)}_t(\ud x), \qquad t\ge 0,
\end{equation}
 satisfies a stochastic differential equation  which converges, as $n$ tends to infinity, to a deterministic differential equation with a given initial condition. After a suitable approximation argument, one can also prove that the Cauchy transform of $\mu_{t}^{(n)}$,  defined by
\[
G^{(n)}(z)=\int (x-z)^{-1}\mu_{t}^{(n)}(\ud x) \qquad \textrm{for} \quad z\in\mathbb{C}_+, 
\] 
converges to the unique solution of a deterministic Burgers' equation. A key ingredient in this argument consists on using the well known fact that for any fixed $n\in\N$, the eigenvalues $(\lambda_{1}^{(n)},\dots,\lambda_{n}^{(n)})$ never collide, in other words, the trajectories of $\lambda_{i}^{(n)}$ and $\lambda_{j}^{(n)}$ never intersect for any $1\leq i,j\leq n$, and satisfy the following  non-colliding diffusion equation 
\begin{align}\label{eq:lambdadyson}
\lambda_{i}^{(n)}(t)
  &=\lambda_{i}^{(n)}(0)+\sqrt{2}W_{t}^{i}+\sum_{j\neq i}\int_{0}^{t}\frac{1}{\lambda_{i}^{(n)}(s)-\lambda_{j}^{(n)}(s)}\ud s,
\end{align}
where $W^{1},\dots, W^{n}$ are independent one-dimensional standard Brownian motions. For further details we refer Anderson et al. \cite{AnGuZe} and Cabanal-Duvillard and Guionnet \cite{CaGu}.

The case where the  $X_{i,j}$'s are fractional Brownian motions of Hurst parameter $H\in(1/2,1)$ was handled in \cite{PaGaPe} using Young integrals and Malliavin calculus techniques. In particular, it was shown that its eigenvalues $(\lambda_{1}^{(n)},\dots, \lambda_{n}^{(n)})$ satisfy a Young integral equation which in turn induces a Skorokhod integral equation for $\langle\mu^{(n)}_t,f\rangle$, when $f\in \Cc_{b}^3(\R)$. Then by taking limits as $n$ tends to infinity in this equation and using some estimations based on Malliavin calculus techniques, one can prove that $\langle\mu^{(n)}_t,f\rangle$ converges to the solution of a deterministic differential equation  which implicitly characterizes the limit process. Similarly to the  Brownian case, the well-posedness of the stochastic Young integral equation for $(\lambda_{1}^{(n)},\dots, \lambda_{n}^{(n)})$ requires the non-collision of the  eigenvalues of $Y^{(n)}$, which was proved by Nualart and Perez-Abreu in \cite{NuPe}. 

As we said before, the previous arguments rely heavily on the fact that the eigenvalues of a fractional Brownian motion with Hurst parameter $H\geq 1/2$ never collide and that a suitable  It\^o or stochastic Young integral equation for $(\lambda_{1}^{(n)},\dots, \lambda_{n}^{(n)})$ can be formulated.  In the case where the $X_{i,j}$'s are general Gaussian processes, these two properties may not hold  and a more refined treatment of the problem is required. Indeed, the non-collision of the eigenvalues for Gaussian processes with highly rough paths, is still an open problem. In addition, if the trajectories of $Y_{i,j}^{(n)}$ are too rough, it is not possible to formulate a stochastic differential equation for its eigenvalues  neither in the It\^o or Young integral sense. In other words  an extended version of the Skorokhod integral is required and consequently the estimations based on Malliavin calculus are harder to handle, since the extended Skorokhod integration doesn't have a clear analogue of  Meyers'  inequality which is required for characterizing the limiting object. 

In the present manuscript, we show that under some mild conditions on the covariance function associated to  $Y^{(n)}$, the process $\langle\mu^{(n)}_t,f\rangle$, for $t\ge 0$, satisfies a Skorokhod stochastic differential  equation (see Lemma 3.1) defined in the extended domain of the divergence (see Section \ref{def:extendeddelta} for a proper definition). In particular, we prove that the Skorokhod stochastic differential  equation  makes sense even in the presence of collision of the eigenvalues. Then we prove a tightness property for the sequence of processes $\{\mu^{(n)} ;n\ge 1\}$ using similar arguments as those presented in \cite{PaGaPe}. It is important to note that due to the lack of a clear analogue of Meyers' inequalities for the extended Skorokhod integral 
deducing the limiting object is not straightforward, in fact we need completely different estimates and techniques to those used in \cite{PaGaPe}.

Our main result  requires the following assumptions on the covariance function  $R$:
\begin{itemize}
\item[\textbf{(H1)}] For every $T>0$, the mapping $s\mapsto R(s,s)$ is continuously differentiable in $(0,\infty)$, continuous at zero and $\frac{d}{ds}R(s,s)$ has finitely many zeros in $(0,T]$. In addition, there exists $\alpha>1$ such that for all $t\in[0,T]$, the mapping $s\mapsto R(s,t)$ is absolutely continuous on $[0,T]$, and 
\begin{align*}
\sup_{0\leq t\leq T}\int_{0}^{T}\Abs{\frac{\partial R}{\partial s}(s,t)}^{\alpha}\ud s<\infty.
\end{align*}
\item[\textbf{(H2)}] There exist constants $\kappa,\gamma>0$, such that for every $s,t>0$, 
$$R(s,s)-2R(s,t)+R(t,t)\leq \kappa \Abs{t-s}^{\gamma}.$$
\end{itemize}

\begin{Theorem}\label{thm:main}
Assume that the covariance function $R$ satisfies conditions \textbf{(H1)} and  \textbf{(H2)}, and $\mu_0^{(n)}$ converges weakly to a probability measure $\mu_0$. Then the family of measure-valued processes $\{\mu^{(n)}: n\ge 1\}$ converges weakly in $\mathcal{C}(\R_{+},\mathtt{Pr}(\R))$ to the unique (deterministic) continuous probability-measure valued function  $(\mu_{t}; t\geq0)$, satisfying  
\begin{align}\label{eq:dynamicsmup}
\Ipm{\mu_{t},f}
  &=\Ipm{\mu_0,f}+\frac{1}{2}\int_{0}^{t}\int_{\R^{2}}\frac{f^{\prime}(x)-f^{\prime}(y)}{x-y}\frac{\ud}{\ud s}R(s,s)\mu_{s}(\ud x)\mu_{s}(\ud y)\ud s,
\end{align}
for each $t\geq0$ and $f\in \mathcal{C}_{b}^{3}$. Moreover, its Cauchy transform $G_{t}(z)$,  satisfies
\begin{align}\label{eq:GandFp}
G_{t}(z)
  &=F_{R(t,t)}(z),\ \ \ \ \ \text{ for } z\in\C_{+} \text{ and } t\geq0,
\end{align}
where $F_{\tau}(z)$ is the unique function differentiable on $\tau$ and analytic on $z$, for $z\in\C_{+}$, satisfying the following Burgers' equation
\begin{align}\label{eq:ICGp}
\frac{\partial}{\partial \tau}F_{\tau}(z)
  &=F_{\tau}(z)\frac{\partial}{\partial z}F_{\tau}(z),\nonumber\\
F_{0}(z)
  &=\int_{\R}\frac{1}{x-z}\mu_0(\ud x).
\end{align}
\end{Theorem}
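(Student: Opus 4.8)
The plan is to prove the statement in three stages: first, relative compactness of $\{\mu^{(n)};n\ge1\}$ in $\Cc(\R_{+},\mathtt{Pr}(\R))$; second, identification of every subsequential limit as a solution of \eqref{eq:dynamicsmup}; and third, uniqueness of the solution of \eqref{eq:dynamicsmup}, obtained by passing to Cauchy transforms and reducing the problem to the Burgers equation \eqref{eq:ICGp}. Once uniqueness is known, all subsequential limits coincide with a single deterministic $\mu$, so the whole family converges weakly in $\Cc(\R_{+},\mathtt{Pr}(\R))$; since the limit is deterministic this is the same as convergence in probability, which is the assertion.

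For the relative compactness I would proceed as follows. For fixed $t$ the laws of $\{\mu_{t}^{(n)}\}$ are tight in $\mathtt{Pr}(\R)$: writing $Y^{(n)}(t)=A^{(n)}+n^{-1/2}X^{(n)}(t)$ and using Weyl's inequality together with the fact that $n^{-1/2}X^{(n)}(t)$ is a normalized Gaussian Wigner matrix with entry variance of order $R(t,t)/n$, the support of $\mu_{t}^{(n)}$ stays, with high probability and uniformly for $t$ in compact sets, in a bounded neighbourhood of the support of $\mu_{0}^{(n)}$, and the latter family is tight because it converges. For the time regularity I would use the deterministic bound
\begin{align*}
\Abs{\Ip{\mu_{t}^{(n)},f}-\Ip{\mu_{s}^{(n)},f}}\le \Norm{f'}_{\infty}\,n^{-1/2}\Norm{Y^{(n)}(t)-Y^{(n)}(s)}_{HS},\qquad f\in\Cc_{b}^{3}(\R),
\end{align*}
together with the identity $n^{-1}\E\big[\Norm{Y^{(n)}(t)-Y^{(n)}(s)}_{HS}^{2}\big]\le 2\kappa\Abs{t-s}^{\gamma}$, which follows from \textbf{(H2)} since the deterministic part $A^{(n)}$ cancels in the increment. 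Gaussian hypercontractivity upgrades this to $\E\big[\Abs{\Ip{\mu_{t}^{(n)},f}-\Ip{\mu_{s}^{(n)},f}}^{2p}\big]\le C_{p,f}\Abs{t-s}^{\gamma p}$ for every integer $p$; choosing $p>1/\gamma$ and running Kolmogorov's tightness criterion along a countable convergence‑determining family $f\in\Cc_{b}^{3}(\R)$ yields tightness in $\Cc(\R_{+},\mathtt{Pr}(\R))$ and shows that every limit point has continuous $\mathtt{Pr}(\R)$‑valued trajectories.

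For the identification of the limit I would start from the Skorokhod stochastic differential equation of Lemma 3.1, which for $f\in\Cc_{b}^{3}(\R)$ takes the form
\begin{align*}
\Ip{\mu_{t}^{(n)},f}=\Ip{\mu_{0}^{(n)},f}+\delta\big(u^{(n),f}\big)_{t}+\frac12\int_{0}^{t}\!\!\int_{\R^{2}}\frac{f'(x)-f'(y)}{x-y}\frac{\ud}{\ud s}R(s,s)\,\mu_{s}^{(n)}(\ud x)\mu_{s}^{(n)}(\ud y)\,\ud s+\varepsilon_{t}^{(n),f},
\end{align*}
where $\delta(u^{(n),f})$ is an extended divergence whose integrand has entries proportional to $n^{-3/2}[f'(Y^{(n)}(\cdot))]_{ij}$ and $\varepsilon^{(n),f}$ is a remainder which converges to $0$ in $L^{2}(\Omega)$ by routine estimates. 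Passing to the limit in the drift term is then straightforward: $(x,y)\mapsto\frac{f'(x)-f'(y)}{x-y}$ extends to a continuous function bounded by $\Norm{f''}_{\infty}$, the weak convergence of $\mu_{s}^{(n)}\otimes\mu_{s}^{(n)}$ holds uniformly for $s$ in compact sets along the subsequence, and $s\mapsto\frac{\ud}{\ud s}R(s,s)$ is continuous on $(0,\infty)$ with the drift well defined by \textbf{(H1)}, so one concludes by dominated convergence. The crucial and hardest point — the main obstacle — is to show that $\delta(u^{(n),f})_{t}\to0$ in $L^{2}(\Omega)$: because the extended divergence does not satisfy Meyer's $L^{p}$ inequalities, the Malliavin‑calculus estimates of \cite{PaGaPe} are unavailable, and one must instead estimate the second moment directly through the isometry‑type bound $\E[\delta(u)^{2}]\le\E[\Norm{u}_{\Hg}^{2}]+\E[\Norm{Du}_{\Hg\otimes\Hg}^{2}]$, controlling $\Norm{u^{(n),f}}_{\Hg}$ and $\Norm{Du^{(n),f}}_{\Hg\otimes\Hg}$ via the $L^{\alpha}$‑bound on $\partial_{s}R$ in \textbf{(H1)} and the Hilbert--Schmidt bounds $\Norm{f'(Y^{(n)}(s))}_{HS}^{2}\le n\Norm{f'}_{\infty}^{2}$, $\Norm{f''(Y^{(n)}(s))}_{HS}^{2}\le n\Norm{f''}_{\infty}^{2}$; the $n^{-3/2}$ prefactor then forces both expectations to be $O(n^{-1})$. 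Granting this, every subsequential limit $\mu$ of $\{\mu^{(n)}\}$ satisfies \eqref{eq:dynamicsmup}.

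Finally, for uniqueness, since for $z\in\C_{+}$ the real and imaginary parts of $x\mapsto(x-z)^{-1}$ belong to $\Cc_{b}^{3}(\R)$, I may substitute $f(x)=(x-z)^{-1}$ in \eqref{eq:dynamicsmup}. Using the partial‑fraction identity $\frac{f'(x)-f'(y)}{x-y}=\frac{1}{(x-z)(y-z)^{2}}+\frac{1}{(x-z)^{2}(y-z)}$ and $\partial_{z}G_{s}(z)=\int_{\R}(x-z)^{-2}\mu_{s}(\ud x)$ gives the integrated Burgers equation
\begin{align*}
G_{t}(z)=G_{0}(z)+\int_{0}^{t}G_{s}(z)\,\partial_{z}G_{s}(z)\,\frac{\ud}{\ud s}R(s,s)\,\ud s,\qquad z\in\C_{+},
\end{align*}
with $G_{0}(z)=\int_{\R}(x-z)^{-1}\mu_{0}(\ud x)$; equivalently, setting $\tau=R(t,t)$ and $G_{t}(z)=F_{\tau}(z)$ one recovers $\partial_{\tau}F_{\tau}=F_{\tau}\partial_{z}F_{\tau}$ with $F_{0}=G_{0}$, that is \eqref{eq:GandFp}--\eqref{eq:ICGp}. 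This quasilinear first‑order equation is solved along the characteristics $z_{0}\mapsto z_{0}-\tau F_{0}(z_{0})$: since $F_{0}$ is analytic and maps $\C_{+}$ into $\C_{+}$, these curves determine $F_{\tau}$ uniquely on $\C_{+}$ (equivalently, $F_{\tau}$ is the Cauchy transform of the free additive convolution of $\mu_{0}$ with the semicircular law of variance $\tau$, the unique solution in the appropriate class). By Stieltjes inversion $G_{t}$ determines $\mu_{t}$, so \eqref{eq:dynamicsmup} has at most one solution. Combined with the first two stages, this shows that the whole family $\{\mu^{(n)}\}$ converges to this unique $\mu$ and that its Cauchy transform is given by \eqref{eq:GandFp}--\eqref{eq:ICGp}. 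The hypothesis in \textbf{(H1)} that $\frac{\ud}{\ud s}R(s,s)$ has finitely many zeros enters precisely to make this derivative integrable near the origin and to guarantee the differentiability of $F_{R(t,t)}$ in the time variable and the continuity of $t\mapsto\mu_{t}$ claimed in the statement.
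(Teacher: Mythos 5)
Your overall architecture (tightness, identification of subsequential limits via the Skorokhod equation of Lemma 3.1, then uniqueness via the Cauchy transform and the Burgers equation with the time change $\tau=R(t,t)$) matches the paper, and the tightness and uniqueness stages are sound, the former differing only cosmetically (Kolmogorov/hypercontractivity instead of the Billingsley criterion, plus a compact-containment argument) from the paper's Hoffman--Wielandt estimate under \textbf{(H2)}. However, there is a genuine gap at exactly the step you yourself flag as the crucial one: to show that the divergence term tends to zero in $L^{2}(\Omega)$ you invoke the isometry-type bound $\E[\delta(u)^{2}]\leq\E[\Norm{u}_{\Hg^{d}}^{2}]+\E[\Norm{Du}_{\Hg^{d}\otimes\Hg^{d}}^{2}]$. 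That bound is a Meyer-type inequality valid for integrands in the standard domain $\D^{1,2}(\Hg^{d})\subset\mathrm{Dom}\,\delta$, whereas here the integrand only belongs to the \emph{extended} domain $\mathrm{Dom}^{*}\delta$ of Definition \ref{def:extendeddelta}. In this setting the quantity $\Norm{u}_{\Hg^{d}}$ is in general neither finite nor even defined: the pairing \eqref{def:extendedIP} is only a bilinear form between step functions and $L^{\beta}[0,T]$-functions, and (as recalled in Section \ref{sec:stochastic_calculus}, following \cite{ChNu}) for rough covariances such as fractional Brownian motion with $H<1/4$ the trajectories of the integrand need not lie in $\Hg$ at all --- this is precisely why the extended domain is introduced, and the absence of a Meyer-type inequality for it is the obstacle the whole paper is built around. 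So your proposed estimate cannot even be formulated, let alone give the $O(n^{-1})$ rate.

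The paper circumvents this by a duality argument that never touches $\Norm{u}_{\Hg^{d}}$: it uses the Skorokhod equation itself to write the divergence term $G_{r}$ as the difference of the increment $\Ipn{\mu_{t}^{(n_{r})},f}-\Ipn{\mu_{0}^{(n_{r})},f}$ and the two drift terms (equation \eqref{eq:Gdual}), all of which are bounded by constants depending only on $f$; then $\E[G_{r}^{2}]$ is expanded as $\E[G_{r}\cdot(\text{those terms})]$, and each summand is evaluated through the duality relation \eqref{eq:Skorohodduality} against explicit functionals of the matrix process. The resulting pairings involve only $\frac{\partial R}{\partial s}(s,t)$ (controlled by \textbf{(H1)}) and the derivatives of the eigenvalue maps, for which the identity $\sum_{k\leq h}\bigl(\frac{\partial\Phi_{i}^{n}}{\partial y_{k,h}}\bigr)^{2}=2$ in \eqref{eq:D2Phisum} yields, via Cauchy--Schwarz as in \eqref{eq:cauchyphi}, bounds of order $n_{r}^{-1}$. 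If you want to repair your argument you would need to reproduce some version of this self-referential duality estimate (or otherwise prove an $L^{2}$ bound for the extended divergence), rather than appeal to the standard isometry.
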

We note that the term $\frac{f^\prime(x)-f^\prime(y)}{x-y}$ when $x=y$, in the integral of the right-hand side of \eqref{eq:dynamicsmup} is understood as $f^{\prime\prime}(x)$.

Another important observation is related to the fractional Brownian motion. Recall that its  covariance function  satisfies
\begin{equation}\label{aux_2}
R(s,t)=\frac{1}{2}(s^{2H}+t^{2H}-\Abs{t-s}^{2H}),
\end{equation}
 for  $H\in(0,1)$. Such covariance function clearly satisfies  conditions \textbf{(H1)} and \textbf{(H2)} and  consequently, Theorem \ref{thm:main} generalizes the results previously proved for  the Brownian motion in \cite{CeLe} and \cite{RoSh},  and for  the  fractional Brownian motion with Hurst parameter $H\in (1/2,1)$ in  \cite{PaGaPe}.

 Finally, we also point out that in the case that $\mu_0=\delta_0$, the unique solution to \eqref{eq:GandFp} is given by
 	\begin{align*}	
 	F_{\tau}(z)=\frac{1}{2\tau}\left(\sqrt{z^2-4\tau}-z\right),\qquad\text{$t\geq 0$,\,\, $z\in\mathbb{C}_+$.}
 	\end{align*}
 This implies that
 	\begin{align*}
 	G_t(z)=\frac{1}{2R(t,t)}\left(\sqrt{z^2-4R(t,t)}-z\right)\qquad\text{$t\geq 0$,\,\, $z\in\mathbb{C}_+$.}
 	\end{align*}
Hence, for each $t\geq0$, $\mu_t$ is a semicircle distribution with variance $R(t,t)$.

The remainder of this manuscript is organized as follows. In Section \ref{sec:stochastic_calculus} we present some preliminaries on Malliavin calculus and Skorohod integration. In particular, we introduce the extended domain of the divergence. Section \ref{sec:evolution} is devoted to the proof  of the  Skorokhod stochastic differential equation for the process of eigenvalues 
associated to $Y^{(n)}$.  The tightness property is proved in  Section \ref{sec:tightness} and, finally in Section \ref{sec:convergence}, the convergence in law of the sequence $\{\mu^{(n)}; n\geq1\}$ is given.
\section{Preliminaries on Malliavin calculus and  Skorokhod integral}\label{sec:stochastic_calculus} 
Let $d\geq 1$ and $T>0$ be fixed. We denote  by $X=((X_{t}^{1},\dots, X_{t}^{d}); t\in[0,T])$  a  $d$-dimensional continuous Gaussian process defined in a probability space $(\Omega, \Fc,\Pb)$ whose covariance satisfies
\begin{align*}
\E\left[X_{s}^{i}X_{t}^j\right]
  &=\delta_{i,j}R(s,t),\qquad s,t\in[0,T],
\end{align*}
for some non-negative definite covariance function $R$. Denote by $\Ef$ the space of step functions on $[0,T]$. We define in $\Ef$ the scalar product 
\begin{align*}
\Ip{\Indi{[0,s]},\Indi{[0,t]}}_{\Hg}
  &:=  \E\left[X_{s}^{1}X_{t}^{1}\right]\qquad \text{ for }\qquad s,t\in[0,T]. 
\end{align*}
Let $\Hg$ be the Hilbert space obtained by taking the completion of $\Ef$ with respect to this product. For every $1\leq i\leq n$ fixed, the mapping $\Indi{[0,t]} \mapsto X_{t}^{i}$ can be extended to linear isometry between $\Hg$ and the Gaussian subspace of $L^{2}\left(\Omega\right)$ generated by the process $(X_{t}^{i}, t\geq0)$. We will denote this isometry by $X^{i}(h)$, for $h\in\Hg$. 

If $f\in\Hg^{d}$ is of the form $f=(f_{1},\dots, f_{d})$, we set $X(f):=\sum_{i=1}^{d}X^{i}(f_{i})$.
The mapping $f\mapsto X(f)$ is a linear isometry between $\Hg^{d}$ and  the Gaussian subspace of   $L^2\left(\Omega \right)$ generated by $X$. Let $\Sf$ denote the set  of all cylindrical random variables of the form
\begin{align*}
F= g(X(h_{1}),\dots, X(h_{m})),
\end{align*} 
where $g:\R^{m}\rightarrow\R$ is an infinitely differentiable function with compact support, and $h_{j}\in\Ef^{d}$. The Malliavin derivative of $F$ with respect to $X$, is the element of $L^{2}(\Omega;\Hg^{d})$, defined by 
\begin{align}\label{aux_4}
DF
  &=\sum_{i=1}^{m}\frac{\partial g}{\partial x_{i}}(X(h_{1}),\dots, X(h_{m}))h_{i}.
\end{align}
For $p\geq1$, the set $\D^{1,p}$ denotes the closure of $\Sf$ with respect to the norm $\Norm{\cdot}_{\D^{1,p}}$, defined by 
\begin{align*}
\Norm{F}_{\D^{1,p}}
  &:=\Big(\E\left[\Abs{F}^{p}\right]+\E\left[\Norm{DF}_{\Hg^{d}}^{p}\right]\Big)^{\frac{1}{p}}.
\end{align*}
The operator $D$ can be consistently extended to the set $\D^{1,p}$. We denote by $\delta$ the adjoint of the operator $D$, also called the divergence operator. A random element $u\in L^{2}(\Omega;\Hg^{d})$ belongs to the domain of $\delta$, denoted by $\mathrm{Dom} \, \delta $, if and only if satisfies
\begin{align*}
\Abs{\E\left[\Ip{DF,u}_{\Hg^{d}}\right]}
  &\leq C_{u}\E\left[F^{2}\right]^{\frac{1}{2}},\ \text{ for every } F\in\D^{1,2},
\end{align*}
where $C_{u}$ is  a constant only depending on $u$. If $u\in \mathrm{Dom} \,\delta$, then the random variable $\delta(u)$ is defined by the duality relationship
\begin{align*}
\E\left[F\delta(u)\right]=\E\left[\Ip{DF,u}_{\Hg^{d}}\right],
\end{align*}
which holds for every $F\in\D^{1,2}$. We will make use of the notation 
\begin{align}\label{eq:Skorohod_def}
\sum_{i=1}^d\int_{0}^{t} u_{s}^{i}\delta X_{s}^{i}
  &:=\delta(u\Indi{[0,t]}),
\end{align} 
for $u\in L^{2}(\Omega;\Hg^{d})$ of the form $u_t=(u^{1}_{t},\dots, u_{t}^{d})$.

In the case where $X$ is a $d$-dimensional Brownian motion, i.e. its covariance function is given by  $R(s,t)=s\wedge t$ and $\Hg=L^{2}[0,T]$, the random variable \eqref{eq:Skorohod_def} is an extension of the It\^o integral. Motivated by this fact, we may interpret $\sum_{i=1}^d\int_{0}^{t} u_{s}^{i}\delta X_{s}^{i}$ as the stochastic integral of the process $u$. Nevertheless, the space $\Hg$ turns out to be too small for this purpose.  Indeed,  in \cite{ChNu} it was shown that in the case where $X$ is a fractional Brownian motion with Hurst parameter $0<H<\frac{1}{4}$, that is to say its covariance function is of the {form \eqref{aux_2},

the trajectories of $X$ do not belong to the space $\Hg$, and in particular, non-trivial processes of the form $(f(u_{s}); s\in[0,T])$, with $f:\R\rightarrow\R$, might not belong to the domain of $\delta$. 
In order to overcome this difficulty, we extend the domain of $\delta$ by following the approach presented in \cite{NuLe} (see also \cite{ChNu}). The main idea for extending the domain of $\delta$, consists on extending the definition of $\Ip{\varphi,\psi}_{\Hg}$ to the case where $\varphi\in L^{\beta}[0,T]$ for some $\beta>1$, and $\psi$ belongs to the space $\Ef$ of step functions over $[0,T]$. 

In the sequel, we will assume that there exists a constant $\alpha>1$ such that the following condition holds. Let $\beta$ be the conjugate of $\alpha$, defined by $\beta:=\alpha/(\alpha-1)$. For any pair of functions $\varphi\in L^{\beta}[0,T]$ and $\psi\in\Ef$ of the form $\psi=\sum_{j=1}^{m}c_{j}\Indi{[0,t_{j}]}$, we define
\begin{align}\label{def:extendedIP}
\Ip{\psi,\varphi}_{\Hg}
  &:=\sum_{j=1}^{m}c_j\int_{0}^{T}\varphi(s)\frac{\partial R}{\partial s}(s,t_{j})\ud s.
\end{align}
This expression is well defined since
\begin{align*}
\Abs{\Ip{\Indi{[0,t]},\varphi	}_{\Hg}}
  &=\Abs{\int_{0}^{T}\varphi_{s}\frac{\partial R}{\partial s}(s,t)\ud s}
	\leq\Norm{\varphi}_{L^{\beta}[0,T]}\sup_{0\leq t\leq T}\left(\int_{0}^{T}\Abs{\frac{\partial R}{\partial s}(s,t)}^{\alpha}\ud s\right)^{\frac{1}{\alpha}}<\infty,
\end{align*}
and coincides with the inner product in $\Hg$ in the case where $\varphi\in\Ef$. Indeed, for $\varphi\in\Ef$ of the form $\varphi=\sum_{i=1}^{n}a_{i}\Indi{[0,t_{i}]},$ we have
\begin{align*}
\Ip{\Indi{[0,t]},\varphi}_{\Hg}
  &=\sum_{i=1}^{n}a_{i}R(t_{i},t)
	=\sum_{i=1}^{n}a_{i}\int_{0}^{t_{i}}\frac{\partial R}{\partial s}(s,t)\ud s=\int_{0}^{T}\varphi(s)\frac{\partial R}{\partial s}(s,t)\ud s.
\end{align*}
We define the extended domain of the divergence as follows.
\begin{Definition}\label{def:extendeddelta}
Let $\Ip{\cdot,\cdot}_{\Hg}$ be the bilinear function defined by \eqref{def:extendedIP}. We say that a stochastic process $u\in L^{1}(\Omega;L^{\beta}[0,T])$ belongs to the extended domain of the divergence $Dom^{*}\delta$ if there exists $p>1$, such that 
\begin{align*}
\Abs{\E\left[\Ip{DF,u}_{\Hg^{d}}\right]}
  &\leq C_{u}\Norm{F}_{L^{p}(\Omega)},
\end{align*}
for any smooth random variable $F\in\Sf$, where $C_{u}$ is some constant depending on $u$. In this case, $\delta(u)$ is defined by the duality relationship
\begin{align}\label{eq:Skorohodduality}
\E\left[F\delta(u)\right]=\E\left[\Ip{DF,u}_{\Hg^{d}}\right].
\end{align}
\end{Definition}

It is important to note that for a general covariance function $R(s,t)$ and $\beta>1$, the domains $\text{Dom}^{*}\delta$ and $\text{Dom} \delta$ are not necessarily comparable (see Section 3 in \cite{NuLe} for further details about this fact).

The next result is a multidimensional version of  It\^o's formula for the Skorokhod integral and for functions that are smooth only on a dense open subset of the Euclidean space and satisfy some extra regularity conditions. In the sequel, for every $i\in\{1,\dots, d\}$, the map $\gamma_{i}:\R^{d}\rightarrow\R^{d-1}$ denotes the projection over the hyperplane $P_{i}:=\{(x_{1},\dots, x_{d})\ |\ x_{i}=0\}$. We  use as well the following notation: for every real function $h:\mathcal{D}\subset\R^{d}\rightarrow\R$,  we define
\begin{align*}
\mathbf{1}_{\mathcal{D}}(x)h(x)
  &:=\left\{\begin{array}{cc}h(x)&\ \ \text{ if }\ x\in \mathcal{D},\\0&\ \ \ \ \ \ \text{ if } \ x\in \R^d\backslash \mathcal{D},\end{array}\right.
\end{align*}
for every $x\in \R^{d}$.

\begin{Theorem}\label{thm:chainrule}
Assume that $R$ satisfies \textbf{(H1)}.  Consider a function $F:\R^{d}\rightarrow\R$, with $d>1$, satisfying the following conditions:
\begin{enumerate}
\item There exists a measurable set $M\subset \R^{d}$, with Lebesgue measure zero, such that $F$ is twice continuously differentiable in $\mathcal{D}:=\R^d\backslash M$ and $\gamma_{i}(M)$ has measure zero with respect to the Lebesgue measure in $P_{i}$.
\item There exist constants $C>0$ and $N>0$, such that for all $x\in \mathcal{D}$ and $i\in\{1,\dots,d\},$ 
\begin{align}\label{cond1:thmchain}
\Abs{F(x)}+\Abs{\frac{\partial F}{\partial x_{i}}(x)}\leq C(1+\Abs{x}^{N}).
\end{align}
\item There exists $0<\delta<1$, such that for every $p\geq1$, $i\in\{1,\dots, d\}$ and $s>0$, the random variable
$|\frac{\partial^2 F}{\partial x_{i}^2}(X_{s})|$ has finite expectation, and 
\begin{align}\label{cond2:thmchain}
\E\left[\Abs{\sum_{i=1}^{d}\frac{\partial^2 F}{\partial x_{i}^2}(X_{s})}^p\right]
  &\leq C\Big(1+R(s,s)^{-p(1-\delta)}\Big),
\end{align}
for some constant $C>0$. 
\end{enumerate}
Then,  the process $u_{s}=(u_{s}^{1},\dots, u_{s}^{d})$ defined by  $u_{s}^{i}:=\mathbf{1}_{\mathcal{D}}(X_s)\frac{\partial F}{\partial x_{i}}(X_s)\mathbf{1}_{[0,t]}(s),$
belongs to $\text{Dom}^*\delta$, and 
\begin{align}\label{eq:Itotypeformula}
F(X_t)
	&=F(X_0)+\sum_{i=1}^{d}\int_{0}^{t}\mathbf{1}_{\mathcal{D}}(X_s)\frac{\partial F}{\partial x_{i}}(X_s)\delta X_s^{i}+\frac{1}{2}\sum_{i=1}^{d}\int_{0}^{t}\mathbf{1}_{\mathcal{D}}(X_s)\frac{\partial^2 F}{\partial x_{i}^2}(X_s)\frac{\ud R(s,s)}{\ud s}\ud s,
\end{align}
for every $t\in[0,T].$
\end{Theorem}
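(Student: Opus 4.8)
The plan is to establish the Itô-type formula \eqref{eq:Itotypeformula} by a three-stage approximation: first smooth $F$ away from the bad set $M$, then apply a classical Skorokhod Itô formula to the smoothed functions, and finally pass to the limit using the growth bounds in conditions (2) and (3). I would begin by fixing a mollification scheme: let $\rho_\varepsilon$ be a standard bump function and set $F_\varepsilon := F * \rho_\varepsilon$, or better, to respect the structure of $M$, use a cutoff $\chi_\varepsilon$ that vanishes in an $\varepsilon$-neighborhood of $M$ and equals $1$ outside a $2\varepsilon$-neighborhood, and work with $F\chi_\varepsilon$ suitably smoothed. The point of hypothesis (1) — that $M$ has Lebesgue measure zero \emph{and} each projection $\gamma_i(M)$ is null in $P_i$ — is exactly what makes the boundary terms generated by differentiating $\chi_\varepsilon$ across $M$ negligible: the $i$-th partial derivative of the cutoff is supported near $M$, and integrating first in the $x_i$ direction (using that $\gamma_i(M)$ is null, so for a.e. line parallel to the $i$-th axis the intersection with the neighborhood of $M$ has small length) shows these contributions vanish as $\varepsilon\to 0$. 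I would also need the law of $X_s$ to be absolutely continuous for $s>0$ with a controlled density, which follows from \textbf{(H1)} since $R(s,s)>0$ away from its finitely many critical points, ensuring $\mathbf{1}_{\mathcal{D}}(X_s)=1$ almost surely.

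Next, for each fixed $\varepsilon$ the function $F_\varepsilon$ is smooth with polynomially bounded derivatives, so a standard multidimensional Skorokhod Itô formula (the version for Gaussian processes with covariance $R$, where the bracket term is $\frac{1}{2}\sum_i \partial_{ii}F_\varepsilon(X_s)\,\frac{\ud R(s,s)}{\ud s}\,\ud s$) applies and gives
\begin{align*}
F_\varepsilon(X_t) = F_\varepsilon(X_0) + \sum_{i=1}^d \int_0^t \frac{\partial F_\varepsilon}{\partial x_i}(X_s)\,\delta X_s^i + \frac{1}{2}\sum_{i=1}^d\int_0^t \frac{\partial^2 F_\varepsilon}{\partial x_i^2}(X_s)\,\frac{\ud R(s,s)}{\ud s}\,\ud s.
\end{align*}
Here I must be careful that this classical formula holds in $\mathrm{Dom}\,\delta$ (or its extension), which for smooth compactly-supported-derivative approximants is standard; the extended domain of Definition \ref{def:extendeddelta} enters only in the limit. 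The left side converges in $L^1(\Omega)$ to $F(X_t)$ by the pointwise convergence $F_\varepsilon\to F$ off $M$, the fact that $X_t\notin M$ a.s., and the uniform polynomial bound \eqref{cond1:thmchain} combined with Gaussian integrability of $|X_t|^N$. The first-derivative term converges similarly once we control the Skorokhod integrals. The drift term is the delicate one: $\partial_{ii}F_\varepsilon(X_s)$ need not be uniformly bounded, but condition \eqref{cond2:thmchain} gives $\E[|\sum_i\partial_{ii}F(X_s)|^p]\le C(1+R(s,s)^{-p(1-\delta)})$, and since $\delta<1$ the exponent $p(1-\delta)$ can be kept below the integrability threshold of the density of $X_s$ near the (finitely many) times where $R(s,s)$ vanishes or is critical — so $\int_0^t \E|\partial_{ii}F(X_s)|\,|\tfrac{\ud R(s,s)}{\ud s}|\,\ud s<\infty$ and dominated convergence applies.

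To pass to the limit in the Skorokhod integral term I would use the duality \eqref{eq:Skorohodduality}: for $G\in\Sf$, $\E[G\,\delta(u_\varepsilon)] = \E[\langle DG, u_\varepsilon\rangle_{\Hg^d}]$, and since $u_\varepsilon^i = \partial_i F_\varepsilon(X_s)\mathbf{1}_{[0,t]}(s) \to \mathbf{1}_{\mathcal{D}}(X_s)\partial_i F(X_s)\mathbf{1}_{[0,t]}(s)$ with the pairing controlled via the extended inner product \eqref{def:extendedIP} and the $L^\beta$-bound coming from \textbf{(H1)} together with \eqref{cond1:thmchain}, the right side converges; rearranging the Itô formula shows $\delta(u_\varepsilon)$ converges in $L^1(\Omega)$, and the limit satisfies the duality bound defining $\mathrm{Dom}^*\delta$, so $u\in\mathrm{Dom}^*\delta$ with $\delta(u)$ the stated integral. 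The main obstacle, as I see it, is twofold and intertwined: (i) rigorously showing the cutoff/mollification boundary terms near $M$ vanish, which is where the hypothesis on $\gamma_i(M)$ must be used through a Fubini argument slicing along coordinate axes and the absolute continuity of the law of $X_s$; and (ii) verifying that the $L^p$ growth \eqref{cond2:thmchain} with $\delta<1$ genuinely beats the blow-up of the Gaussian density's relevant moments at the finitely many degenerate times of $s\mapsto R(s,s)$ — this is a quantitative estimate requiring the precise local behavior of $R(s,s)$ near its zeros and critical points, controlled by the $\alpha>1$ integrability in \textbf{(H1)}.
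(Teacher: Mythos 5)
Your proposal follows a mollification/cutoff strategy that differs from the paper, but as written it has two genuine gaps. First, the ``standard multidimensional Skorokhod It\^o formula'' you want to apply to the smoothed approximants $F_\varepsilon$ is not available in $\mathrm{Dom}\,\delta$ at this level of generality: as recalled in Section 2 (citing Cheridito--Nualart), for rough covariances such as fractional Brownian motion with $H<1/4$ the trajectories of $X$ do not belong to $\Hg$, and processes of the form $\varphi(X_s)$ with $\varphi$ smooth and bounded need not lie in $\mathrm{Dom}\,\delta$. So, contrary to your remark that the extended domain ``enters only in the limit'', the identity you take as a starting point is itself an extended-domain It\^o formula, i.e.\ essentially the statement being proved (for smooth $F$). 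Second, the boundary terms generated by the cutoff $\chi_\varepsilon$ cannot be dismissed from hypothesis (1): two derivatives of $\chi_\varepsilon$ cost $\varepsilon^{-2}$ and are supported in an $\varepsilon$-neighbourhood of $M$, so you would need a quantitative bound on $\Pb\big(\mathrm{dist}(X_s,M)\le\varepsilon\big)$, whereas the hypotheses only assert that $M$ and its projections $\gamma_i(M)$ are null --- a purely qualitative statement compatible with arbitrarily slow decay of that probability. Moreover, condition \eqref{cond2:thmchain} controls only the \emph{sum} $\sum_i\partial^2_{x_i}F(X_s)$ for $F$ itself (this is the whole point of Remark (iii): in the eigenvalue application the individual second derivatives blow up like $(\lambda_i-\lambda_j)^{-1}$ near $M$), so the bracket terms of your approximants, which mix individual second derivatives of $F$ with derivatives of the cutoff, have no dominating function; your dominated-convergence step is unsupported. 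This is exactly the mechanism behind the counterexample of Remark (ii) ($F(x)=|x|$, Brownian motion), where the mollified bracket picks up a singular part (a local time); nothing in your scheme excludes it except the coordinate-slicing argument you only sketch.

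For comparison, the paper's proof avoids approximating $F$ altogether: it fixes a cylindrical $Y\in\Sf$, writes $\E[\Ip{DY,u}_{\Hg^{d}}]$, $\E[YF(X_t)-YF(X_0)]$ and the bracket term as integrals of $F$ against the joint Gaussian density $f_s$ of $(X_s,X_{t_1},\dots,X_{t_m})$, integrates by parts coordinate-wise --- the nullity of $\gamma_j(M)$ is used precisely so that a.e.\ line parallel to the $x_j$-axis misses $M$, hence the one-dimensional integration by parts produces no boundary terms --- and then combines the three identities through the heat-type equation \eqref{eq:gausskerheateq} satisfied by $f_s$. The defining inequality of $\mathrm{Dom}^{*}\delta$ is then obtained by H\"older's inequality together with the piecewise monotonicity of $s\mapsto R(s,s)$ from \textbf{(H1)}, with condition \eqref{cond2:thmchain} applied only to the sum of second derivatives, exactly as stated. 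To make your route rigorous you would have to replace the cutoff by this coordinate-wise integration by parts against the Gaussian density, at which point you would essentially be reproducing the paper's duality argument.
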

\noindent Before proving this result, we provide some interesting remarks.\\

\noindent (i) For every $T>0$, with probability one, the random set $I_{T}=\{s\in[0,T]\ |\ X_{s}\in M\}$ has Lebesgue measure $|I_{T}|$ equal to zero, since 
\begin{align*}
\E\left[|I_{T}|\right]
  &=\E\left[\int_{0}^{T}\mathbf{1}_{\{X_{s}\in M\}}\ud s\right]=\int_{0}^T\Pb[X_{s}\in M]\ud s=0,
\end{align*}
where the last equality follows from the fact that $M$ has Lebesgue measure zero and $X_{s}$ has a Gaussian distribution. As a consequence, with probability one the trajectories of $\mathbf{1}_{\mathcal{D}}(X_{s})$ are Lebesgue almost everywhere equal to one, which allows us to rewrite equation \eqref{eq:Itotypeformula} as follows
\begin{align*}
F(X_{t})
	&=F(X_0)+\sum_{i=1}^{d}\int_{0}^{t}\frac{\partial F}{\partial x_{i}}(X_{s})\delta X_{s}^{i}+\frac{1}{2}\sum_{i=1}^{d}\int_{0}^{t}\frac{\partial^2 F}{\partial x_{i}^2}(X_{s})\frac{\ud R(s,s)}{\ud s}\ud s,
\end{align*}
with the understanding that, although the integrands might be undefined for some values of $s$, they are well defined Lebesgue almost everywhere. Nevertheless, we will use the notation \eqref{eq:Itotypeformula}, in order to avoid confusion.

(ii) A version of the previous result was first presented in \cite[Theorem~3.1]{NuPe} for $d\geq 1$, where the condition (1) was replaced by the weaker condition that $f$ is differentiable in an open dense set $\mathcal{D}$ of $\R^{d}$. Unfortunately, this result is false, as we can verify by taking $d=1$, $f(x)=\Abs{x}$, and covariance $R(s,t)=s\wedge t$, which corresponds to the standard Brownian motion. Under these conditions, the third term appearing in the right hand side of \eqref{eq:Itotypeformula} must be replaced by the local time of the Brownian motion.  In order for the result to hold, we require the more restrictive condition (1) instead of the differentiability of $f$ over an open dense set.

(iii) Condition (3) is slightly more general than the one presented in \cite[Theorem~3.1]{NuPe}. This generalization is crucial for providing a Skorohod integral equation for $\langle \mu_t^{(n)},f\rangle$, since in this case the function $\sum_{k\leq h}\frac{\partial^2 F}{\partial^{2}x_i}$ is smooth and bounded, unlike the individual components $\frac{\partial^2 F}{\partial^{2}x_i}$, which are considerably more erratic (see Section \ref{sec:evolution} for details).\\

The proof that we present below is based on similar arguments as those used in \cite[Theorem~3.1]{NuPe}, but some modifications and additional techniques are required.

\begin{proof}[Proof of Theorem \ref{thm:chainrule}]
Let $Y\in\Sf$ be of the form 
\begin{align*}
Y
  &=\tilde{g}(V(h_{1}),\dots, V(h_{q})),
\end{align*}
for $h_{i}=(h_{i}^{1},\dots, h_{i}^{d})$, with $h_{i}^{l}\in\Ef$ and $\tilde{g}:\R^{qd}\rightarrow\R$ infinitely differentiable with compact support. Since each $h_{i}^{l}$ is a step function of the form
\begin{align*}
h_{i}^{l}(x)
  &=\sum_{j=1}^{r}a_{i,j}^{l}\mathbf{1}_{[0,s_{i,j}^l)}(x),
\end{align*}
for some $r\in\N$, $a_{i,j}^l\in\R$ and $s_{i,j}^{l}\in\R_{+}$ for $1\leq j\leq r$, we deduce that there exist $m\in\N$ and $t_{1},\dots, t_{m}\in\R_{+}$, such that 
\begin{align*}
Y=g(X_{t_{1}},\dots ,X_{t_{m}}),
\end{align*}
for some $g:\R^{md}\rightarrow\R$ infinitely differentiable with compact support. 
Using the chain rule for $D$, we obtain
\begin{align*}
DY
  &=\sum_{i=1}^{m}\sum_{j=1}^{d}\frac{\partial g}{\partial y_{i,j}}(X_{t_{1}},\dots, X_{t_{m}})DX_{t_{i}}^{j},
\end{align*}
where $\frac{\partial g}{\partial y_{i,j}}(v_{1},\dots, v_{m})$ denotes the partial derivative of $g$ with respect to the $j$-th component of $v_{i}$, evaluated at $(v_{1},\dots, v_{m})$. By condition \eqref{cond1:thmchain} and the way we choose the process $u_s$, the inner product $\Ip{\mathbf{1}_{[0,t_{i}]},u^{j}}_{\Hg}$ is well defined and satisfies
\begin{align*}
\Ip{\mathbf{1}_{[0,t_{i}]},u^{j}}_{\Hg}
  &=\big\langle\mathbf{1}_{[0,t_{i}]},\mathbf{1}_{\mathcal{D}}(X_{\cdot})\frac{\partial F}{\partial x_{j}}(X_{\cdot})\mathbf{1}_{[0,t]}(\cdot)\big\rangle_{\Hg}
	  =\int_{0}^{t}\mathbf{1}_{\mathcal{D}}(X_s)\frac{\partial F}{\partial x_{j}}(X_{s})\frac{\partial R}{\partial s}(s,t_{i})\ud s.
\end{align*}
Hence, using the fact that $\Ip{DX_{t_{i}}^{j},u}_{\Hg^d}=\Ip{\mathbf{1}_{[0,t_{i}]},u^{j}}_{\Hg}$, we get
\begin{align*}
\Ip{DY,u}_{\Hg^{d}}
	&=\sum_{i=1}^{m}\sum_{j=1}^{d}\frac{\partial g}{\partial y_{i,j}}(X_{t_{1}},\dots, X_{t_{m}})\int_{0}^{t}\mathbf{1}_{\mathcal{D}}(X_{s})\frac{\partial F}{\partial x_{j}}(X_{s})\frac{\partial R}{\partial s}(s,t_{i})\ud s.
\end{align*}
Using the previous expression as well as \eqref{cond1:thmchain}, we deduce that $\Ip{DY,u}_{\Hg^{d}}$ is integrable. Indeed, since $g$ is compactly supported, we can use \eqref{cond1:thmchain} to obtain a constant $C>0$ such that
\begin{align}\label{ineq:DYuintegrable}
\E\left[\Abs{\Ip{DY,u}_{\Hg^{d}}}\right]
	&\leq C\sum_{i=1}^{m}\sum_{j=1}^{d}\int_{0}^{T}\E\left[(1+\Abs{X_{s}}^N)\right]\Abs{\frac{\partial R}{\partial s}(s,t_{i})}\ud s.
\end{align}
Moreover, since $X_{s}$ is Gaussian and the mapping $s\mapsto R(s,s)$ is continuous in $[0,T]$,
$$\E\left[\Abs{X_{s}}^{N}\right]\leq N!!R(s,s)^{\frac{N}{2}}\leq  N!!\sup_{0\leq s\leq T}R(s,s)^{\frac{N}{2}}<\infty,$$ 
where $N!!$ denotes the double factorial of $N$. The integrability of $\Abs{\Ip{DY,u}_{\Hg^{d}}}$ then  follows from \eqref{ineq:DYuintegrable} and condition \textbf{(H1)}. As a consequence, we can write
\begin{align}\label{eq:EDyu}
\E\left[\Ip{DY,u}_{\Hg^{d}}\right]
  &=\sum_{i=1}^{m}\sum_{j=1}^{d}\int_{0}^{t}\E\left[\frac{\partial g}{\partial y_{i,j}}(X_{t_{1}},\dots, X_{t_{m}})\mathbf{1}_{\mathcal{D}}(X_{s})\frac{\partial F}{\partial x_{j}}(X_{s})\right]\frac{\partial R}{\partial s}(s,t_{i})\ud s\nonumber\\
	&=\sum_{i=1}^{m}\sum_{j=1}^{d}\int_{0}^{t}\int_{\R^{dm}}\int_{\R^{d}}\frac{\partial g}{\partial y_{i,j}}(y)\mathbf{1}_{\mathcal{D}}(x)\frac{\partial F}{\partial x_{j}}(x)\frac{\partial R}{\partial s}(s,t_{i})f_s(x,y)\ud x\,\ud y\,\ud s,
\end{align}
where $f_s:\R^{d(m+1)}\rightarrow\R_{+}$ denotes the join density of the Gaussian vector $(X_{s},X_{t_{1}},\dots X_{t_{m}})$. Let $x$ be of the form $x=(x_{1},\dots, x_{d})$. Since $\gamma_{j}(M)$ has measure zero in $P_{j}$, we deduce that for every $y\in\R^{md}, s>0$ and $j\geq1$,
\begin{align}\label{eq:previntbyparts}
\int_{\R^{d}}\mathbf{1}_{\mathcal{D}}(x)\frac{\partial F}{\partial x_{j}}(x)f_s(x,y)\ud x
  &=\int_{P_{j}}\int_{\R}\mathbf{1}_{\mathcal{D}}(x)\frac{\partial F}{\partial x_{j}}(x)f_s(x,y)\ud x_{j}\prod_{i\neq j}\ud x_{i}\nonumber\\
	&=\int_{\gamma_{j}(M)^c}\int_{\R}\mathbf{1}_{\mathcal{D}}(x)\frac{\partial F}{\partial x_{j}}(x)f_s(x,y)\ud x_{j}\prod_{i\neq j}\ud x_{i}.
\end{align}
By condition (1), for every $(x_{1},\dots,x_{j-1},x_{j+1},\dots, x_{d})\in \gamma_{i}(M)^{c}$, the mapping 
$$t\mapsto F(x_{1},\dots,x_{j-1},t,x_{j+1},\dots, x_{d})$$ 
is differentiable in $\R$, and hence, using the polynomial growth of $F$ and $\frac{\partial F}{\partial x_{j}}$, we can remove the term $\mathbf{1}_{\mathcal{D}}(x)$ in the right hand side of \eqref{eq:previntbyparts}, and integrate by parts the variable $x_{i}$, in order to deduce
\begin{align}\label{eq:intbyparts}
\int_{\R^{d}}\frac{\partial F}{\partial x_{j}}(x)f_s(x,y)\ud x
  &=\int_{\gamma_{j}(M)^c}\int_{\R}\frac{\partial F}{\partial x_{j}}(x)f_s(x,y)\ud x_{j}\prod_{i\neq j}\ud x_{i}\nonumber\\
  &=-\int_{\gamma_{j}(M)^c}\int_{\R}F(x)\frac{\partial f_s}{\partial x_{j}}(x,y)\ud x_{j}\prod_{i\neq j}\ud x_{i}\nonumber\\
	&=-\int_{\R^d}\mathbf{1}_{\mathcal{D}}(x)F(x)\frac{\partial f_s}{\partial x_{j}}(x,y)\ud x.
\end{align}

From \eqref{eq:EDyu} and \eqref{eq:intbyparts}, we conclude that
\begin{align}\label{eq:IBP1}
\E\left[\Ip{DY,u}_{\Hg^{d}}\right]
  &=-\sum_{i=1}^{m}\sum_{j=1}^{d}\int_{0}^{t}\int_{\R^{dm}}\int_{\R^{d}}\frac{\partial g}{\partial y_{i,j}}(y)\mathbf{1}_{\mathcal{D}}(x)F(x)\frac{\partial R}{\partial s}(s,t_{i})\frac{\partial f_s}{\partial x_{j}}(x,y)\ud x\,\ud y\,\ud s\nonumber\\
	&=\sum_{i=1}^{m}\sum_{j=1}^{d}\int_{0}^{t}\int_{\R^{dm}}\int_{\R^{d}}g(y)\mathbf{1}_{\mathcal{D}}(x)F(x)\frac{\partial R}{\partial s}(s,t_{i})\frac{\partial^2 f_s}{\partial y_{i,j}\partial x_{j}}(x,y)\ud x\,\ud y\,\ud s.
\end{align}
Similarly, using relation \eqref{cond2:thmchain}, as well as the fact that $g$ is compactly supported, we have that for every $1\leq j\leq d$, the random variable 
\begin{equation}\label{therv}
Y\int_{0}^{t}\mathbf{1}_{\mathcal{D}}(X_{s})\sum_{j=1}^{d}\frac{\partial^{2}F}{\partial x_{j}^{2}}(X_{s})\frac{\ud}{\ud s}R(s,s)\ud s
\end{equation}
 is integrable. Indeed, by \eqref{cond2:thmchain}, there exists a constant $C>0$ such that
\begin{align}\label{ineq:Yd2Fdrintegral}
\E\left[\Abs{Y\int_{0}^{t}\mathbf{1}_{\mathcal{D}}(X_s)\sum_{j=1}^{d}\frac{\partial^{2}F}{\partial x_{j}^{2}}(X_{s})\frac{\ud}{\ud s}R(s,s)\ud s}\right]
  &\leq C\int_{0}^{T}\E\left[\Abs{\sum_{j=1}^{d}\frac{\partial^{2}F}{\partial x_{j}^{2}}(X_{s})}\right]\Abs{\frac{\ud}{\ud s}R(s,s)}\ud s\nonumber\\
  &\leq C\int_{0}^{T}(1+R(s,s)^{-1+\delta})\Abs{\frac{\ud}{\ud s}R(s,s)}\ud s.
\end{align}
By condition \textbf{(H1)}, there exist $L\in\N$ and $0=T_{1}<\cdots <T_{L}=T$, such that $R(s,s)$ is monotone in $[T_{i},T_{i+1}]$ for all $1\leq i\leq L-1$. 
Hence,
\begin{align}\label{altineq:dRss}
\int_{0}^{T}(1+R(s,s)^{-1+\delta})\Abs{\frac{\ud R(s,s)}{\ud s}}\ud s
  &= \sum_{i=1}^{L-1}\bigg|\int_{T_{i}}^{T_{i+1}}(1+R(s,s)^{-1+\delta})\frac{\ud R(s,s)}{\ud s}\ud s\bigg|\nonumber\\
	&= \sum_{i=1}^{L-1}\big|R(T_{i+1},T_{i+1})-R(T_{i},T_{i})\nonumber\\
	&+\frac{1}{\delta}(R(T_{i+1},T_{i+1})^\delta-R(T_{i},T_{i})^\delta)\big|.
\end{align}
Therefore, by \eqref{ineq:Yd2Fdrintegral}, the random variable in \eqref{therv} is integrable, as required. Proceeding as in the proof of \eqref{eq:intbyparts} and using the fact that $\E\left[\Abs{\frac{\partial^{2}F}{\partial x_{j}^{2}}(X_{s})}\right]<\infty$ for all $s>0$, we can show that for all $y\in\R^{md}$ and $s>0$,
\begin{align*}
\int_{\R^{d}}\mathbf{1}_{\mathcal{D}}(x)\frac{\partial^2 F}{\partial x_{j}^2}(x)f_{s}(x,y)\ud x
	&=\int_{\R^{d}}\mathbf{1}_{\mathcal{D}}(x)F(x)\frac{\partial^2 f_{s}}{\partial x_{j}^2}(x,y)\ud x,
\end{align*}
and consequently,
\begin{multline}\label{eq:IBP3}
\E\left[Y\int_{0}^{t}\mathbf{1}_{\mathcal{D}}(X_s)\sum_{j=1}^d\frac{\partial^{2}F}{\partial x_{j}^{2}}(X_{s})\frac{\ud}{\ud s}R(s,s)\ud s\right]\\
\begin{aligned}
  &=\int_{0}^{t}\int_{\R^{dm}}\int_{\R^{d}}g(y)\mathbf{1}_{\mathcal{D}}(x)\sum_{j=1}^d\frac{\partial^2 F}{\partial x_{j}^2}(x)f_{s}(x,y)\frac{\ud}{\ud s}R(s,s)\ud x\,\ud y\,\ud s\\
	&=\int_{0}^{t}\int_{\R^{dm}}\int_{\R^{d}}g(y)\mathbf{1}_{\mathcal{D}}(x)\sum_{j=1}^dF(x)\frac{\partial^2 f_{s}}{\partial x_{j}^2}(x,y)\frac{\ud}{\ud s}R(s,s)\ud x\,\ud y\,\ud s.
\end{aligned}
\end{multline}
In addition, by \eqref{cond1:thmchain}, the random variable $YF(X_{t})$ is integrable for every $t\geq0$, and
\begin{align}\label{eq:IBP2}
\E\left[YF(X_{t})-YF(X_0)\right]
  &=\int_{\R^{dm}}\int_{\R^{d}}g(y)F(x)f_{t}(x,y)\ud x\,\ud y-\int_{\R^{dm}}\int_{\R^{d}}g(y)F(x)f_{0}(x,y)\ud x\,\ud y\nonumber\\
	&=\int_{0}^{t}\int_{\R^{dm}}\int_{\R^{d}}g(y)F(x)\frac{\partial f_{s}}{\partial s}(x,y)\ud x\,\ud y\,\ud s\nonumber\\
	&=\int_{0}^{t}\int_{\R^{dm}}\int_{\R^{d}}g(y)\mathbf{1}_{\mathcal{D}}(x)F(x)\frac{\partial f_{s}}{\partial s}(x,y)\ud x\,\ud y\,\ud s,
\end{align}
where the last identity follows from the fact that $\R\backslash \mathcal{D}$ has Lebesgue measure zero. Finally, we have
\begin{align}\label{eq:gausskerheateq}
\frac{\partial f_{s}}{\partial s}(x,y)
  &=\sum_{i=1}^{m}\sum_{j=1}^{d}\frac{\partial R}{\partial s}(s,t_{i})\frac{\partial^2 f_s}{\partial y_{i,j}\partial x_{j}}(x,y)
	+\frac{1}{2}\sum_{j=1}^{d}\frac{\ud}{\ud s}R(s,s)\frac{\partial^2 f_{s}}{\partial x_{j}^2}(x,y).
\end{align}
From \eqref{eq:IBP1},\eqref{eq:IBP3},\eqref{eq:IBP2} and \eqref{eq:gausskerheateq}, we get that 
\begin{align}\label{eq:chainaux}
\E\left[YF(X_{t})-YF(X_0)\right]
  &=\E\left[\Ip{DY,u}_{\Hg^{d}}\right]+\frac{1}{2}\E\left[Y\int_{0}^{t}\mathbf{1}_{\mathcal{D}}(X_s)\sum_{j=1}^{d}\frac{\partial^{2}F}{\partial x_{j}^{2}}(X_{s})\frac{\ud}{\ud s}R(s,s)\ud s\right].
\end{align}
Next we use \eqref{eq:chainaux} to prove that $u$ belongs to the extended domain of the divergence $\text{Dom}^{*}\delta$.  Using H\"older inequality and Minkowski inequality, we have that
\begin{multline*}
\Abs{\E\left[Y\int_{0}^{t}\mathbf{1}_{\mathcal{D}}(X_s)\sum_{j=1}^{d}\frac{\partial^{2}F}{\partial x_{j}^{2}}(X_{s})\frac{\ud}{\ud s}R(s,s)\ud s\right]}\\
\begin{aligned}
	&\leq \Norm{Y}_{L^{\frac{p}{p-1}}(\Omega)}\Norm{\int_{0}^{t}\mathbf{1}_{\mathcal{D}}(X_s)\sum_{j=1}^{d}\frac{\partial^{2}F}{\partial x_{j}^{2}}(X_{s})\frac{\ud}{\ud s}R(s,s)\ud s}_{L^{p}(\Omega)}\\
	&\leq \Norm{Y}_{L^{\frac{p}{p-1}}(\Omega)}\int_{0}^{t}\Norm{\sum_{j=1}^{d}\frac{\partial^{2}F}{\partial x_{j}^{2}}(X_{s})}_{L^{p}(\Omega)}\Abs{\frac{\ud}{\ud s}R(s,s)}\ud s.
\end{aligned}
\end{multline*}
In addition, using \eqref{cond2:thmchain}, we get 
\begin{align*}
\int_{0}^{T}\Norm{\frac{\partial^{2}F}{\partial x_{j}^{2}}(X_{s})}_{L^{p}(\Omega)}\Abs{\frac{\ud}{\ud s}R(s,s)}\ud s
	&\leq C\int_{0}^{T}(1+R(s,s)^{-p(1-\delta)})\Abs{\frac{\ud}{\ud s}R(s,s)}\ud s.
\end{align*}
%
Thus, if $1<p<\frac{1}{1-\delta}$, by replacing $\delta$ for $1-p(1-\delta)$ in \eqref{altineq:dRss}, we obtain
\begin{align}\label{ineq:d2Fdr}
\int_{0}^{t}\Norm{\frac{\partial^{2}F}{\partial x_{j}^{2}}(X_{s})}_{L^{p}(\Omega)}\Abs{\frac{\ud}{\ud s}R(s,s)}\ud s
&\leq C\int_{0}^{T}(1+R(s,s)^{-p(1-\delta)})\Abs{\frac{\ud}{\ud s}R(s,s)}\ud s
<\infty.
\end{align}	

Therefore, using H\"older's inequality, as well as \eqref{eq:chainaux} and \eqref{ineq:d2Fdr}, we deduce that there exists a constant $C>0$ such that
\begin{align*}
\Abs{\E\left[\Ip{DY,u}_{\Hg^{d}}\right]}
  &\leq\Abs{\E\left[YF(X_0)\right]}+\frac{1}{2}\Abs{\E\left[Y\int_{0}^{t}\mathbf{1}_{\mathcal{D}}(X_{s})\frac{\partial^{2}F}{\partial x_{j}^{2}}(X_{s})\frac{\ud}{\ud s}R(s,s)\ud s\right]}+\Abs{\E\left[YF(X_{t})\right]}\\
	&\leq C\left(\Norm{Y}_{L^{1}(\Omega)}+\Norm{Y}_{L^{\frac{p}{p-1}}(\Omega)}+\E\left[\Abs{Y}(1+\Abs{X_{t}}^{N})\right]\right)\\
	&\leq C\left(\Norm{Y}_{L^{1}(\Omega)}+\Norm{Y}_{L^{\frac{p}{p-1}}(\Omega)}+\Norm{Y}_{L^{\frac{p}{p-1}}(\Omega)}\Norm{1+\Abs{X_{t}}^{N}}_{L^{p}(\Omega)}\right),
\end{align*}
and hence, there exists a constant $C>0$ such that 
\begin{align}\label{ineq:domdivu}
\Abs{\E\left[\Ip{DY,u}_{\Hg^{d}}\right]}
  &\leq C\Norm{Y}_{L^{\frac{p}{p-1}}(\Omega)}.
\color[rgb]{0,0,0}\end{align}
From \eqref{eq:chainaux} and \eqref{ineq:domdivu}, it follows that $u$ belongs to the extended domain of the divergence $\text{Dom}^{*}\delta$, and 
\begin{align*}
F(X_{t})-F(X_0)
  &=\sum_{i=1}^{d}\int_{0}^{t}\mathbf{1}_{\mathcal{D}}(X_{s})\frac{\partial F}{\partial x_{i}}(X_{s})\delta X_{s}^{i} + \frac{1}{2}\sum_{i=1}^{d}\int_{0}^{t}\mathbf{1}_{\mathcal{D}}(X_{s})\frac{\partial^2 F}{\partial x_{i}^2}(X_{s})\frac{\ud}{\ud s}R(s,s)\ud s,
\end{align*}
as required.
\end{proof}
\section{Stochastic Evolution of the eigenvalues of a matrix-valued Gaussian process}\label{sec:evolution}
We first recall some notation. Consider a family of independent and identically distributed centered Gaussian processes $\{X_{i,j}; i,j\in\N\}$ defined in a probability space $(\Omega,\Fc,\Pb)$. We will assume that the covariance function $R(s,t):=\E\left[X_{1,1}(s)X_{1,1}(t)\right]$ satisfies the hypotheses \textbf{(H1)} and \textbf{(H2)}. Consider as well a sequence of deterministic symmetric matrices $A^{(n)}=[A_{i,j}^{(n)}]_{1\leq i,j\leq n}$, with ordered eigenvalues $\lambda_{1}^{(n)}(0)\geq\cdots\geq \lambda_{n}^{(n)}(0)$ and spectral empirical distribution 
\[
\mu_0^{(n)}:=\frac{1}{n}\sum_{i=1}^{n}\delta_{\lambda_i^{(n)}(0)} \qquad\textrm{satisfying}\qquad
\mu_0^{(n)}
  \stackrel{\mathcal{L}}{\rightarrow}\mu_0\qquad\text{as $n\to\infty$},
\]
for some probability law $\mu_0$ and where $\stackrel{\mathcal{L}}{\rightarrow}$ means convergence in law. Let $Y^{(n)}=[Y_{i,j}^{(n)}]_{1\leq i,j\leq n}$ be the renormalized symmetric Gaussian matrix of dimension $n\times n$, given by
\begin{equation}\label{eq:scaledB}
Y_{i,j}^{(n)}(t)
  :=\left\{\begin{array}{cc}
	\displaystyle\frac{1}{\sqrt{n}}X_{i,j}(t) +A_{i,j}^{(n)}&\ \ \text{ if }\ i<j,\\
	\displaystyle\frac{\sqrt{2}}{\sqrt{n}}X_{i,i}(t) + A_{i,i}^{(n)}&\ \ \text{ if }\ i=j,
	\end{array}\right.
	\end{equation}
Denote by $\lambda_{1}^{(n)}(t)\geq\cdots\geq\lambda_{n}^{(n)}(t)$ the ordered eigenvalues of $Y^{(n)}(t)$, and by $\mu^{(n)}:=(\mu_{t}^{(n)}, t\geq0)$ the corresponding empirical measure process
\begin{align*}
\mu_{t}^{(n)}
  &=\frac{1}{n}\sum_{j=1}^{n}\delta_{\lambda_{j}^{(n)}(t)}.
\end{align*}
For a given probability measure $\nu$, and a $\nu$-integrable function $f$, we use the notation 
$\Ip{\nu,f}:=\int f(x)\nu(\ud x).$
In particular,
\begin{align}\label{eq:muaction}
\Ipn{\mu_{t}^{(n)},f}
  &=\frac{1}{n}\sum_{i=1}^{n}f(\lambda_{i}^{(n)}(t)).
\end{align}
From \cite[Lemma~5.1]{NuPe}, it follows that for every $i=1,\dots, n$, there exists a function $\Phi_{i}^n:\R^{\frac{n(n+1)}{2}}\rightarrow\R$, which is infinitely differentiable in an open subset $G\subset\R^{\frac{n(n+1)}{2}}$, with $|G^{c}|=0$, such that $\lambda_{i}^{(n)}(t)=\Phi_{i}^n(Y^{(n)}(t))$. Moreover, every element $X\in G$, viewed as an $n\times n$ matrix, has a factorization of the form $Z^{(n)}=U^{(n)} {\tt D}(U^{(n)})^{*}$, where ${\tt D}$ is a diagonal matrix with entries ${\tt D}_{i,i}=\lambda_{i}^{(n)}$ such that $\lambda_{1}^{(n)}>\dots>\lambda_{n}^{(n)}$, $U^{(n)}$ is an orthogonal matrix with $U^{(n)}_{i,i}>0$ for all $i$, $U^{(n)}_{i,j}\neq0$ and all the minors of $U^{(n)}$ have non zero determinants. In addition, for any $k\leq h$, we have 
\begin{align}
\frac{\partial \Phi_{i}^n}{\partial y_{k,h}}(Z^{(n)})
  &=2 U^{(n)}_{i,k}U^{(n)}_{i,h}\mathbf{1}_{\{k\neq h\}} + \sqrt{2}(U^{(n)}_{i,k})^2\mathbf{1}_{\{k=h\}},\label{eq:D1PhiU}\\
\frac{\partial^{2}\Phi_{i}^n}{\partial y_{k,h}^2}(Z^{(n)}),
  &=2\sum_{j\neq i}\frac{\Abs{U^{(n)}_{i,k}U^{(n)}_{j,h}+U^{(n)}_{i,h}U^{(n)}_{j,k}}^2}{\lambda_{i}^{(n)}-\lambda_{j}^{(n)}}\mathbf{1}_{\{k\neq h\}}
	+4\sum_{j\neq i}\frac{\Abs{U^{(n)}_{i,k}U^{(n)}_{j,k}}^2}{\lambda_{i}^{(n)}-\lambda_{j}^{(n)}}\mathbf{1}_{\{k=h\}}\label{eq:D2PhiU}.
\end{align}
Using the orthogonality of the columns of $U^{(n)}$, we deduce  from \eqref{eq:D1PhiU} and \eqref{eq:D2PhiU} that
\begin{align}
\sum_{k\leq h}\frac{\partial^{2}\Phi_{i}^n}{\partial y_{k,h}^2}(Z^{(n)})
  =\sum_{j\neq i}\frac{2}{\lambda_{i}^{(n)}-\lambda_{j}^{(n)}}\ \ \ \ \ \ \ \ \ \text{ and }\ \ \ \ \ \ \ \ \ \sum_{k\leq h}\left(\frac{\partial \Phi_{i}^n}{\partial y_{k,h}}(Z^{(n)})\right)^2
  =2\label{eq:D2Phisum}.
\end{align}
Using Lemma \ref{thm:chainrule}, we can prove the following result, which describes the time evolution of the eigenvalues $(\lambda_{1}^{(n)}(t),\dots,\lambda_{n}^{(n)}(t))$ in terms of the Skorohod integral.

\begin{Lemma}\label{lem:evolution}
For every $f\in \Cc_{b}^2(\R)$ and $t\geq0$, we have
\begin{align}\label{eq:evolution}
\big\langle\mu_{t}^{(n)},f\big\rangle
  &=\big\langle\mu_{0}^{(n)},f\big\rangle +\frac{1}{2n^{2}}\sum_{i=1}^{n}\int_{0}^{t}f^{\prime\prime}(\Phi_{i}^{n}(Y^{(n)}(s)))\frac{\ud}{\ud s}R(s,s)\ud s\nonumber\\
  &+\frac{1}{n^{\frac{3}{2}}}\sum_{i=1}^{n}\sum_{k\leq h}\int_{0}^{t}f^{\prime}(\Phi_{i}^{n}(Y^{(n)}(s)))\mathbf{1}_{G}(Y^{(n)}(s))\frac{\partial \Phi_{i}^{n}}{\partial y_{k,l}}(Y^{(n)}(s))\delta X_{k,h}(s)\\
	&+\frac{1}{2}\int_{0}^{t}\int_{\R^2}\frac{f^{\prime}(x)-f^{\prime}(y)}{x-y}\frac{\ud}{\ud s}R(s,s)\mu_{s}^{(n)}(\ud x)\mu_{s}^{(n)}(\ud y)\ud s.\nonumber
\end{align}
\end{Lemma}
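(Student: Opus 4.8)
The plan is to apply Theorem \ref{thm:chainrule} to each eigenvalue $\lambda_i^{(n)}(t)=\Phi_i^n(Y^{(n)}(t))$, sum over $i$, normalize by $1/n$, and then recognize the second-order term as the double-integral expression in \eqref{eq:evolution}. First I would fix $f\in\Cc_b^2(\R)$ and work with the composition $F:=f\circ\Phi_i^n:\R^{n(n+1)/2}\to\R$, where the ambient dimension is $d=n(n+1)/2$ and the Gaussian process is the vector of upper-triangular entries $(X_{k,h}; k\le h)$ with the scaling from \eqref{eq:scaledB} (so the diagonal entries carry an extra $\sqrt2$; this is exactly why the factors $\sqrt2$ and $2$ appear in \eqref{eq:D1PhiU}). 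The set $M$ of Theorem \ref{thm:chainrule} is taken to be $G^c$ (up to the measure-zero caveats already recorded in \cite[Lemma~5.1]{NuPe}), which has Lebesgue measure zero and whose projections are null, so hypothesis (1) holds. Hypothesis (2), the polynomial growth of $F$ and its first derivatives, follows because $f\in\Cc_b^2$ makes $f,f'$ bounded, and from \eqref{eq:D1PhiU} together with $\sum_{k\le h}(\partial\Phi_i^n/\partial y_{k,h})^2=2$ in \eqref{eq:D2Phisum} the first derivatives of $F$ are uniformly bounded.

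The delicate hypothesis is (3): for the individual function $F=f\circ\Phi_i^n$ one has, by the chain rule,
\begin{align*}
\frac{\partial^2 F}{\partial y_{k,h}^2}
  &=f''(\Phi_i^n)\Big(\frac{\partial\Phi_i^n}{\partial y_{k,h}}\Big)^2+f'(\Phi_i^n)\frac{\partial^2\Phi_i^n}{\partial y_{k,h}^2},
\end{align*}
and the term $\partial^2\Phi_i^n/\partial y_{k,h}^2$ in \eqref{eq:D2PhiU} contains the dangerous reciprocals $(\lambda_i^{(n)}-\lambda_j^{(n)})^{-1}$, which blow up near collisions. The crucial observation — highlighted in remark (iii) after Theorem \ref{thm:chainrule} — is that we must apply Theorem \ref{thm:chainrule} to $F=f\circ\Phi_i^n$ as a whole and use the \emph{summed} identity $\sum_{k\le h}\partial^2\Phi_i^n/\partial y_{k,h}^2=\sum_{j\ne i}2/(\lambda_i^{(n)}-\lambda_j^{(n)})$ from \eqref{eq:D2Phisum} only \emph{after} recombining with the other eigenvalues. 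Concretely, summing the Laplacian-type quantity $\sum_{k\le h}\partial^2 F/\partial y_{k,h}^2$ over $i=1,\dots,n$ produces $\sum_i f''(\lambda_i^{(n)})\cdot 2+\sum_i f'(\lambda_i^{(n)})\sum_{j\ne i}2/(\lambda_i^{(n)}-\lambda_j^{(n)})$; by the standard antisymmetrization trick (swap $i\leftrightarrow j$ in the double sum) the second piece equals $\sum_{i\ne j}(f'(\lambda_i^{(n)})-f'(\lambda_j^{(n)}))/(\lambda_i^{(n)}-\lambda_j^{(n)})$, which is manifestly bounded by $\|f''\|_\infty n^2$ with no singularity. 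Thus after summing over $i$, condition \eqref{cond2:thmchain} holds trivially with $\delta$ arbitrarily close to $1$ (the right-hand side bound is a finite constant, so any $\delta<1$ works and in particular one does not even need the $R(s,s)^{-p(1-\delta)}$ slack), and finiteness of $\E|\partial^2 F/\partial y_{k,h}^2(X_s)|$ for each fixed $s$ holds because for fixed $s>0$ the Gaussian matrix $Y^{(n)}(s)$ has a density and its eigenvalues are a.s.\ distinct. I expect this recombination step — verifying that, although Theorem \ref{thm:chainrule}(3) genuinely fails for each individual $\Phi_i^n$, it holds for the sum — to be the main technical point; one should be slightly careful to invoke Theorem \ref{thm:chainrule} in a form that only tests the \emph{sum} $\sum_i\sum_{k\le h}\partial^2(f\circ\Phi_i^n)/\partial y_{k,h}^2$ against condition \eqref{cond2:thmchain}, which is exactly what remark (iii) and the statement of condition (3) (written with $\sum_{i=1}^d$ inside the expectation) are designed to allow.

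With the hypotheses verified, Theorem \ref{thm:chainrule} applied to $f\circ\Phi_i^n$ gives
\begin{align*}
f(\lambda_i^{(n)}(t))
  &=f(\lambda_i^{(n)}(0))+\sum_{k\le h}\int_0^t \mathbf{1}_G(Y^{(n)}(s))\frac{\partial(f\circ\Phi_i^n)}{\partial y_{k,h}}(Y^{(n)}(s))\,\delta X_{k,h}(s)\\
  &\quad+\frac{1}{2}\sum_{k\le h}\int_0^t\mathbf{1}_G(Y^{(n)}(s))\frac{\partial^2(f\circ\Phi_i^n)}{\partial y_{k,h}^2}(Y^{(n)}(s))\frac{\ud}{\ud s}R(s,s)\,\ud s,
\end{align*}
where I should keep track of the fact that the underlying process for the entry $X_{k,h}$ is $n^{-1/2}X_{k,h}$ (or $\sqrt2\,n^{-1/2}X_{k,k}$), so each $\delta X_{k,h}$ here carries a factor $n^{-1/2}$ relative to the base Gaussian process $X_{k,h}$, and the covariance $R$ in the second term is multiplied by the corresponding $n^{-1}$; this is the source of the $n^{-3/2}$ and the $n^{-2}$ prefactors in \eqref{eq:evolution} after we also divide by $n$ to form $\langle\mu_t^{(n)},f\rangle=\frac1n\sum_i f(\lambda_i^{(n)}(t))$. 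In the first-order (Skorokhod) term, expanding $\partial(f\circ\Phi_i^n)/\partial y_{k,h}=f'(\Phi_i^n)\,\partial\Phi_i^n/\partial y_{k,h}$ reproduces the third line of \eqref{eq:evolution} verbatim. In the second-order term, splitting $\partial^2(f\circ\Phi_i^n)/\partial y_{k,h}^2$ as above and summing over $i$ and over $k\le h$: the $f''$ piece gives, using $\sum_{k\le h}(\partial\Phi_i^n/\partial y_{k,h})^2=2$, the term $\frac{1}{2n^2}\sum_i\int_0^t f''(\Phi_i^n(Y^{(n)}(s)))\frac{\ud}{\ud s}R(s,s)\,\ud s$ (the extra $1/n^2$ being $1/n$ from averaging times $1/n$ from the covariance scaling, and the $2$ cancelling the $1/2$), while the $f'$ piece gives, using $\sum_{k\le h}\partial^2\Phi_i^n/\partial y_{k,h}^2=\sum_{j\ne i}2/(\lambda_i^{(n)}-\lambda_j^{(n)})$ and then the antisymmetrization, exactly $\frac{1}{2}\int_0^t\int_{\R^2}\frac{f'(x)-f'(y)}{x-y}\frac{\ud}{\ud s}R(s,s)\,\mu_s^{(n)}(\ud x)\mu_s^{(n)}(\ud y)\,\ud s$ (after normalizing the double sum by $1/n^2$ to recover the product measure $\mu_s^{(n)}\otimes\mu_s^{(n)}$). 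Finally I would note that since $G^c$ has Lebesgue measure zero and $Y^{(n)}(s)$ is Gaussian, the indicator $\mathbf{1}_G(Y^{(n)}(s))$ equals $1$ for a.e.\ $s$ almost surely, justifying the dropping of $\mathbf{1}_G$ in the second-order terms while keeping it in the Skorokhod term as written. Assembling these three contributions and the boundary term $\langle\mu_0^{(n)},f\rangle$ yields \eqref{eq:evolution}.
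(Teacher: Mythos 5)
Your overall plan (apply the It\^o-type formula of Theorem \ref{thm:chainrule}, use \eqref{eq:D1PhiU}--\eqref{eq:D2Phisum} and antisymmetrization) is the right one, but there are two genuine gaps. First, the way you invoke Theorem \ref{thm:chainrule} is not licensed by its statement. You propose to apply it to each $F=f\circ\Phi_i^n$ separately and then to ``test condition (3) only against the sum over $i$''. But condition (3) is a condition on the sum over the \emph{coordinates} of the single function $F$ that is fed into the theorem; it says nothing about sums over different functions, and, as you yourself concede, it genuinely fails for each individual $f\circ\Phi_i^n$, since $\sum_{k\le h}\partial^2(f\circ\Phi_i^n)/\partial y_{k,h}^2=2f''(\lambda_i^{(n)})+2f'(\lambda_i^{(n)})\sum_{j\neq i}(\lambda_i^{(n)}-\lambda_j^{(n)})^{-1}$ is not controlled near collisions. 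You therefore cannot apply the theorem eigenvalue by eigenvalue and repair the hypotheses by summing afterwards. The paper's resolution, which your text gestures at but does not execute, is to apply Theorem \ref{thm:chainrule} \emph{once}, to the single averaged function $F_n(C)=\frac1n\sum_{i=1}^n f(\Phi_i^n(A^{(n)}+C))$; then the coordinate sum $\sum_{k\le h}\partial^2 F_n/\partial y_{k,h}^2$ already contains the sum over eigenvalues, the antisymmetrization trick yields the uniform bound $\frac4n\Norm{f''}_\infty$, and condition \eqref{cond2:thmchain} is verified for the function actually used.

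Second, condition (3) also demands that each individual $\E\big[\big|\partial^2F_n/\partial y_{k,h}^2(X_s)\big|\big]$ be finite, and your justification --- that for fixed $s>0$ the eigenvalues of $Y^{(n)}(s)$ are a.s.\ distinct --- is insufficient: almost-sure non-collision gives a.s.\ finiteness of $|\lambda_i^{(n)}-\lambda_j^{(n)}|^{-1}$, not its integrability. This finiteness is genuinely needed (it is what justifies the coordinatewise integration by parts inside the proof of Theorem \ref{thm:chainrule}), and the paper proves it by dominating the shifted, rescaled Gaussian density by a centered one and reducing to the finiteness of $\int|\Phi_i^n(x)-\Phi_j^n(x)|^{-1}\phi_1(x)\,\ud x$, i.e.\ to an inverse-moment bound for GOE eigenvalue gaps as in \cite[Equation~(5.6)]{NuPe}. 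Without such an estimate your verification of condition (3) is incomplete. A smaller bookkeeping point: the $f''$ contribution coming from $\frac12\cdot 2\cdot n^{-2}$ is $\frac1{n^2}\sum_i\int_0^t f''$, of which only half appears as the standalone term in \eqref{eq:evolution}; the other half is the diagonal $x=y$ part of the double integral (each diagonal atom of $\mu_s^{(n)}\otimes\mu_s^{(n)}$ carries mass $n^{-2}$, with the convention $\frac{f'(x)-f'(y)}{x-y}\big|_{x=y}=f''(x)$), a splitting your assembly omits.
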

\begin{proof}
For simplicity, we introduce $n^{-1/2}X^{(n)}:=Y^{(n)}-A^{(n)}$ and we write
\begin{align}\label{eq:muf1}
\Ipn{\mu_{t}^{(n)},f}
  &=\frac{1}{n}\sum_{i=1}^{n}f(\lambda_{i}^{(n)}(t))
	=\frac{1}{n}\sum_{i=1}^{n}f(\Phi_{i}^{n}(Y^{(n)}(t)))
	=: F_{n}(n^{-1/2}X^{(n)}(t)),
\end{align}
where $F_{n}(C)$, for $C\in\R^{\frac{n(n+1)}{2}}$, is such that
\begin{align*}
F_{n}(C)
  &:=\frac{1}{n}\sum_{i=1}^{n}f(\Phi_{i}^{n}(A^{(n)}+C)).
\end{align*}
Next we show that the right hand side of \eqref{eq:muf1} satisfies the conditions of Theorem \ref{thm:chainrule} for a suitable choice of $\mathcal{D}$ and $M$. 

Observe that $\Phi^{n}$ is infinitely differentiable in the set of symmetric matrices whose  characteristic polynomials do not have multiple roots, or equivalently, the matrices without multiple eigenvalues. As a consequence, the mapping $x\mapsto \Phi(x+A^{(n)})$ is differentiable in the complement of 
\begin{align}\label{eq:Mdef}
M_{A^{(n)}}:=\{x\in\R^{\frac{n(n+1)}{2}}\ |\ p(x+A^{(n)})=0\},
\end{align}
where $p:\R^{\frac{n(n+1)}{2}}\rightarrow\R$ denotes the discriminant of the matrix induced by $x$, defined by $p(x)=\prod_{i\neq j}(\Phi^n_{i}(x)-\Phi^n_{j}(x))^2$. It is well known that $p$ is a polynomial in the entries of $x$ (see \cite[Appendix~A.4]{AnGuZe} for a proof of this fact) and consequently, $M_{A^{(n)}}$ is an algebraic variety. Moreover, by a result by Von Neumann and Wigner (see \cite{Lax}), $M_{A^{(n)}}$ has codimension 2, namely, the maximal dimension of the tangent vector spaces at the non-singular points of $M_{A^{(n)}}$ is equal to $\frac{n(n+1)}{2}-2$. As a consequence, the projection $\gamma_{j}(M_{A^{(n)}})$ is a variety of codimension at least 1 embedded in $P_{j}:=\{(x_{1},\dots, x_{d})\ |\ x_{j}=0\},$ and thus $\gamma_{j}(M_{A^{(n)}})$ has Lebesgue measure zero on $P_{j}$. From here we conclude that condition (1) in Theorem \ref{thm:chainrule} holds for $\mathcal{D}:=\R^{\frac{n(n+1)}{2}}\backslash M_A^{(n)}$.

 Next we prove condition \eqref{cond1:thmchain}.  First we observe  for every $C\in G$ that
 the partial derivative of $F_{n}(n^{-1/2}C)$ with respect to the $(k,h)$-th component, denoted by $\frac{\partial }{\partial y_{k,h}}F_{n}(n^{-1/2}C)$, is given by
\begin{align}\label{eq:DFid}
\frac{\partial }{\partial y_{k,h}}F_{n}(n^{-1/2}C)
	&= \frac{1}{n^{\frac{3}{2}}}\sum_{i=1}^{n}f^{\prime}(\Phi_{i}^{n}(n^{-1/2}C+A^{(n)}))\frac{\partial \Phi_{i}^{n}}{\partial y_{k,h}}(n^{-1/2}C+A^{(n)}),
\end{align}
Hence, using the fact that $\big|\frac{\partial \Phi_{i}^{n}}{\partial y_{k,h}}\big|\leq 2$ (see equation \eqref{eq:D1PhiU}), we get
\begin{align*}
\Abs{\frac{\partial }{\partial y_{k,h}}F_{n}(n^{-1/2}C)}
	&\leq \frac{1}{n^{\frac{3}{2}}}\Norm{f^{\prime}}_{\infty}\sum_{i=1}^{n}\bigg{|}\frac{\partial \Phi_{i}^{n}}{\partial y_{k,h}}(n^{-1/2}C+A^{(n)})\bigg{|}
	\leq \frac{2}{\sqrt{n}}\Norm{f^{\prime}}_{\infty}.
\end{align*}
Using the previous inequality, we conclude that condition	\eqref{cond1:thmchain} holds.

 To prove condition \eqref{cond2:thmchain} in Theorem \ref{thm:chainrule}, we see that for every $1\leq k\leq h\leq n$ and $t>0$ fixed,
\begin{align}\label{eq:D2Fid}
\frac{\partial^2 }{\partial y_{k,h}^2}F_{n}(n^{-1/2}X^{(n)}(t))
  &=\frac{1}{n^2}\sum_{i=1}^{n}f^{\prime\prime}(\Phi_{i}^{n}(Y^{(n)}(t)))\left(\frac{\partial \Phi_{i}^{n}}{\partial y_{k,h}}(Y^{(n)}(t))\right)^{2}\nonumber\\
	&+\frac{1}{n^2}\sum_{i=1}^{n}f^{\prime}(\Phi_{i}^{n}(Y^{(n)}(t)))\frac{\partial^2 \Phi_{i}^{n}}{\partial y_{k,h}^2}(Y^{(n)}(t)),
\end{align}
and hence, using relations $\Abs{\frac{\partial \Phi_{i}^{n}}{\partial y_{k,h}}}\leq 2$ and $\Abs{\frac{\partial^{2}\Phi_{i}^{n}}{\partial y_{k,h}^2}}\leq\sum_{i\neq j}\frac{4}{\lambda^{(n)}_{i}-\lambda^{(n)}_{j}}$ (see equation \eqref{eq:D2PhiU}), we obtain 
\begin{align}\label{ineq:D2Flambda}
\E\bigg[\bigg|\frac{\partial^2 }{\partial y_{k,h}^2}F_{n}(n^{-1/2}X^{(n)}(t))\bigg|\bigg]
  &\leq\frac{4}{n}\Norm{f^{\prime\prime}}_{\infty}+\frac{1}{n^2}\sum_{i=1}^{n}\E\bigg[\bigg|\frac{\partial^2 \Phi_{i}^{n}}{\partial y_{k,h}^2}(Y^{(n)}(t))\bigg|\bigg]\nonumber\\
	&\leq\frac{4}{n}\Norm{f^{\prime\prime}}_{\infty}+\frac{4}{n^2}\sum_{i\neq j}\E\bigg[|\lambda_{i}^{(n)}(t)-\lambda_{j}^{(n)}(t)|^{-1}\bigg].
\end{align}
To show the right hand side is finite we proceed as follows. For $x\in\R^{\frac{n(n+1)}{2}}$, let $\phi_{\varepsilon}(x)$ denote the Gaussian kernel of variance $\varepsilon$. We can easily check that there exists constants $C>0$ and $\sigma>0$ only depending on $n, A^{(n)}$ and $R(t,t)$ such that, after identifying $A^{(n)}$ as an element of $\R^{\frac{d(d+1)}{2}}$, 
$$\phi_{R(t,t)^{\frac{1}{2}}}(n^{-1/2}(y-A^{(n)}))\leq C \phi_{\sigma}(x),$$
and consequently,
\begin{align}\label{eq:invmomentsphi}
\E\bigg[|\lambda_{i}^{(n)}(t)-\lambda_{j}^{(n)}(t)|^{-1}\bigg]
  &=\int_{\R^{\frac{n(n+2)}{2}}}|\Phi_{i}^n(n^{-1/2}x+A^{(n)})-\Phi_{j}^n(n^{-1/2}x+A^{(n)})|^{-1}\phi_{R(t,t)^{\frac{1}{2}}}(x)\ud x\nonumber\\
	&=\int_{\R^{\frac{n(n+2)}{2}}}|\Phi_{i}^n(y)-\Phi_{j}^n(y)|^{-1}\phi_{R(t,t)^{\frac{1}{2}}}(n^{-1/2}(y-A^{(n)}))\ud x\nonumber\\
	&\leq C\int_{\R^{\frac{n(n+2)}{2}}}|\Phi_{i}^n(x)-\Phi_{j}^n(x)|^{-1}\phi_{\sigma}(x)\ud x\nonumber\\
	&= C\sigma^{\frac{n(n+2)}{2}}\int_{\R^{\frac{n(n+2)}{2}}}|\Phi_{i}^n(x)-\Phi_{j}^n(x)|^{-1}\phi_{1}(x)\ud x.
\end{align}
Similarly to \cite[Equation~(5.6)]{NuPe}, we can use the joint density of the eigenvalues of a standard GOE of dimension $n$, to deduce that the right hand side of \eqref{eq:invmomentsphi} is finite. Hence, from \eqref{ineq:D2Flambda} we conclude that 
$$\E\bigg[\bigg|\frac{\partial^2 }{\partial y_{k,h}^2}F_{n}(n^{-1/2}X^{(n)}(t))\bigg|\bigg]<\infty.$$
Moreover, by \eqref{eq:D2Phisum} and \eqref{eq:D2Fid}, we have 
\begin{align*}
\sum_{k\leq h}\frac{\partial^2 }{\partial y_{k,h}^2}F_{n}(n^{-1/2}X^{(n)}(t))
  &=\frac{2}{n^2}\sum_{i=1}^{n}f^{\prime\prime}(\Phi_{i}^{n}(Y^{(n)}(t)))+\frac{2}{n^2}\sum_{i\neq j}\frac{f^{\prime}(\Phi_{i}^{n}(Y^{(n)}(t)))}{\Phi_{i}^{n}(Y^{(n)}(t))-\Phi_{j}^{n}(Y^{(n)}(t))}\\
	&\hspace{-1.4cm}=\frac{2}{n^2}\sum_{i=1}^{n}f^{\prime\prime}(\Phi_{i}^{n}(Y^{(n)}(t)))+\frac{1}{n^2}\sum_{i\neq j}\frac{f^{\prime}(\Phi_{i}^{n}(Y^{(n)}(t)))-f^{\prime}(\Phi_{j}^{n}(Y^{(n)}(t)))}{\Phi_{i}^{n}(Y^{(n)}(t))-\Phi_{j}^{n}(Y^{(n)}(t))},
\end{align*}
where we have used
\[
\sum_{i\neq j}\frac{f^{\prime}(\Phi_{i}^{n}(Y^{(n)}(t)))}{\Phi_{i}^{n}(Y^{(n)}(t))-\Phi_{j}^{n}(Y^{(n)}(t))}=-\sum_{i\neq j}\frac{f^{\prime}(\Phi_{j}^{n}(Y^{(n)}(t)))}{\Phi_{i}^{n}(Y^{(n)}(t))-\Phi_{j}^{n}(Y^{(n)}(t))}.
\]
Thus, by the mean value theorem, we conclude that
\begin{align*}
\bigg|\sum_{k\leq h}\frac{\partial^2 }{\partial y_{k,h}^2}F_{n}(n^{-1/2}X^{(n)}(t))\bigg|
  &\leq\frac{4}{n}\Norm{f^{\prime\prime}}_{\infty},
\end{align*}
which proves relation \eqref{cond2:thmchain}. Therefore, the right hand side of \eqref{eq:muf1} satisfies the conditions of Theorem \ref{thm:chainrule}. As a consequence, 
\begin{align}\label{eq:evolutionprevp}
\Ipn{\mu_{t}^{(n)},f}-\Ipn{\mu_{0}^{(n)},f}
  &=\sum_{1\leq k\leq h\leq n}\int_{0}^{t}\mathbf{1}_{\mathcal{D}}(X^{(n)}(s))\frac{\partial F_{n}}{\partial y_{k,h}}(n^{-1/2}X^{(n)}(s))\delta X_{k,h}(s)\nonumber\\
	&+\frac{1}{2}\sum_{1\leq k\leq h \leq n}\int_{0}^{t}\mathbf{1}_{\mathcal{D}}(X^{(n)}(s))\frac{\partial^2 F_{n}}{\partial y_{k,h}^2}(n^{-1/2}X^{(n)}(s))\frac{\ud}{\ud s}R(s,s)\ud s.
\end{align}
Moreover, by Remark (i) after Theorem  \ref{thm:chainrule}, the indicators $\mathbf{1}_{\mathcal{D}}(X^{(n)}(s))$ can be replaced by $\mathbf{1}_{{G}}(X^{(n)}(s))$, which leads to
\begin{align}\label{eq:evolutionprev}
\Ipn{\mu_{t}^{(n)},f}-\Ipn{\mu_{0}^{(n)},f}
  &=\sum_{1\leq k\leq h\leq n}\int_{0}^{t}\mathbf{1}_{{G}}(X^{(n)}(s))\frac{\partial F_{n}}{\partial y_{k,h}}(n^{-1/2}X^{(n)}(s))\delta X_{k,h}(s)\nonumber\\
	&+\frac{1}{2}\sum_{1\leq k\leq h \leq n}\int_{0}^{t}\mathbf{1}_{{G}}(X^{(n)}(s))\frac{\partial^2 F_{n}}{\partial y_{k,h}^2}(n^{-1/2}X^{(n)}(s))\frac{\ud}{\ud s}R(s,s)\ud s.
\end{align}

From relations \eqref{eq:D2Phisum} and \eqref{eq:D2Fid}, we deduce that
\begin{multline}\label{eq:D2Fid2}
\mathbf{1}_{{G}}(X^{(n)}(t))\sum_{1\leq k\leq h \leq n}\frac{\partial^2 }{\partial y_{k,h}^2}F_{n}(n^{-1/2}X^{(n)}(t))\\
\begin{aligned}
  &=\frac{\mathbf{1}_{{G}}(X^{(n)}(t))}{n^2}\sum_{i=1}^{n}f^{\prime\prime}(\Phi_{i}^{n}(Y^{(n)}(t)))\sum_{1\leq k\leq h \leq n}\left(\frac{\partial \Phi_{i}^{n}}{\partial y_{k,h}}(Y^{(n)}(t))\right)^{2}\\
	&+\frac{\mathbf{1}_{{G}}(X^{(n)}(t))}{n^2}\sum_{i=1}^{n}f^{\prime}(\Phi_{i}^{n}(Y^{(n)}(t)))\sum_{1\leq k\leq h \leq n}\frac{\partial^2 \Phi_{i}^{n}}{\partial y_{k,h}^2}(Y^{(n)}(t))\\
	&=\frac{2\mathbf{1}_{{G}}(X^{(n)}(t))}{n^2}\left(\sum_{i=1}^{n}f^{\prime\prime}(\Phi_{i}^{n}(Y^{(n)}(t)))
	+\sum_{i=1}^{n}\sum_{j\neq  i}\frac{f^{\prime}(\Phi_{i}^{n}(Y^{(n)}(t)))}{\lambda^{(n)}_{i}-\lambda^{(n)}_{j}}\right).
\end{aligned}
\end{multline}
Combining \eqref{eq:DFid}, \eqref{eq:evolutionprev} and \eqref{eq:D2Fid2}, we get 
\begin{align*}
\big\langle\mu_{t}^{(n)},f\big\rangle
  &=\big\langle\mu_{0}^{(n)},f\big\rangle +\frac{1}{n^{2}}\sum_{i=1}^{n}\int_{0}^{t}f^{\prime\prime}(\Phi_{i}^{n}(Y^{(n)}(s)))\frac{\ud}{\ud s}R(s,s)\ud s\\
  &+\frac{1}{n^{\frac{3}{2}}}\sum_{i=1}^{n}\sum_{k\leq h}\int_{0}^{t}f^{\prime}(\Phi_{i}^{n}(Y^{(n)}(s)))\mathbf{1}_{G}(Y^{(n)}(s))\frac{\partial \Phi_{i}^{n}}{\partial y_{k,l}}(Y^{(n)}(s))\delta X_{k,h}(s)\\
	&+\frac{1}{2}\int_{0}^{t}\int_{\R^2}\mathbf{1}_{\{x\neq y\}}\frac{f^{\prime}(x)-f^{\prime}(y)}{x-y}\frac{\ud}{\ud s}R(s,s)\mu_{s}^{(n)}(\ud x)\mu_{s}^{(n)}(\ud y)\ud s.
\end{align*}
Equation \eqref{eq:evolution} then follows from the fact that for every $s>0$,
\begin{align*}
\frac{1}{n^{2}}\sum_{i=1}^{n}f^{\prime\prime}(\Phi_{i}^{n}(Y^{(n)}(s)))
  &=\int_{\R^2}\mathbf{1}_{\{x=y\}}f^{\prime\prime}(x)\mu_{s}^{n}(dx)\mu_{s}^{n}(dy)
\end{align*}
.
\end{proof}
\section{Tightness of the family of laws $\{\mu^{(n)}, n\geq 1\}$.}\label{sec:tightness}
In order to prove tightness for the family $\{\mu^{(n)}, n\geq1\},$ we follow the approach presented in \cite{PaGaPe}. Namely, we show that for every test function $f$ belonging to the set $\Cc^1(\R)$, the process $\langle \mu_{t}^{(n)}, f\rangle$ satisfies the Billingsley criteria.
\begin{Proposition}\label{prop:tightness}
Assume that $R(s,t)$ satisfies hypothesis \textbf{(H2)}. Then, almost surely, the family of measures $\{\mu^{(n)}, n\geq 1\}$ is tight in the space $\mathcal{C}(\R_{+},\mathtt{Pr}(\R))$. 
\end{Proposition}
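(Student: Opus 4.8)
The plan is to follow the scheme of \cite{PaGaPe}. Since $\mathtt{Pr}(\R)$ is Polish and the bounded Lipschitz functions are convergence-determining, I would reduce the assertion to two ingredients: (a) a compact-containment estimate, namely that for every $T>0$ one has, almost surely, $\lim_{R\to\infty}\sup_{n}\sup_{0\le t\le T}\mu_t^{(n)}(\{|x|>R\})=0$; and (b) that for every Lipschitz function $f$ (in particular, for every $f\in\mathcal{C}_b^1(\R)$) the real-valued process $t\mapsto\langle\mu_t^{(n)},f\rangle$ satisfies Billingsley's moment criterion with constants that are uniform in $n$. Granting (a) and (b), an Arzel\`a--Ascoli argument on $\mathcal{C}(\R_+,\mathtt{Pr}(\R))$, exactly as in \cite{PaGaPe}, yields the almost sure relative compactness of $\{\mu^{(n)}\}$. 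What makes (b) elementary, and lets us avoid the Skorokhod equation of Lemma~\ref{lem:evolution} altogether, is that the time increments of $\langle\mu^{(n)},f\rangle$ are controlled pathwise by the Hoffman--Wielandt inequality.

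For (b), fix $0\le s\le t$. Since the eigenvalues $\lambda_1^{(n)}(\cdot)\ge\cdots\ge\lambda_n^{(n)}(\cdot)$ are taken in decreasing order, the Hoffman--Wielandt inequality for symmetric matrices gives $\sum_{i=1}^{n}(\lambda_i^{(n)}(t)-\lambda_i^{(n)}(s))^2\le\|Y^{(n)}(t)-Y^{(n)}(s)\|_{\mathrm{HS}}^2$, so that by Cauchy--Schwarz,
\begin{align*}
\big|\langle\mu_t^{(n)},f\rangle-\langle\mu_s^{(n)},f\rangle\big|
  &\le\frac{\|f'\|_\infty}{n}\sum_{i=1}^{n}\big|\lambda_i^{(n)}(t)-\lambda_i^{(n)}(s)\big|
   \le\frac{\|f'\|_\infty}{\sqrt n}\,\big\|Y^{(n)}(t)-Y^{(n)}(s)\big\|_{\mathrm{HS}}.
\end{align*}
By \eqref{eq:scaledB} the deterministic matrix $A^{(n)}$ cancels in the increment, so
\begin{align*}
\frac1n\big\|Y^{(n)}(t)-Y^{(n)}(s)\big\|_{\mathrm{HS}}^2
  &=\frac{2}{n^2}\sum_{1\le i\le j\le n}\big|X_{i,j}(t)-X_{i,j}(s)\big|^2,
\end{align*}
which, up to the factor $1+\tfrac1n\le2$, is an \emph{average} of the $n(n+1)/2$ centered Gaussian variables $|X_{i,j}(t)-X_{i,j}(s)|^2$, each with variance $R(t,t)-2R(s,t)+R(s,s)\le\kappa|t-s|^\gamma$ by \textbf{(H2)}. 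Jensen's inequality combined with the Gaussian identity $\E[Z^{2p}]=(2p-1)!!\,(\E[Z^2])^p$ then yields, for every integer $p\ge1$, a constant $C_p$ \emph{independent of $n$} with $\E\big[(\tfrac1n\|Y^{(n)}(t)-Y^{(n)}(s)\|_{\mathrm{HS}}^2)^{p}\big]\le C_p\,|t-s|^{p\gamma}$, hence $\E\big[|\langle\mu_t^{(n)},f\rangle-\langle\mu_s^{(n)},f\rangle|^{2p}\big]\le C_p\,\|f'\|_\infty^{2p}\,|t-s|^{p\gamma}$. Taking $p>1/\gamma$ makes $p\gamma>1$, and since $\langle\mu_0^{(n)},f\rangle\to\langle\mu_0,f\rangle$ is a bounded, hence tight, sequence of reals, Billingsley's criterion applies; as the bound is uniform over all $f$ with $\|f'\|_\infty\le1$, it in fact gives the almost sure equicontinuity of $\{\mu^{(n)}\}$ on compact time intervals for the bounded-Lipschitz metric on $\mathtt{Pr}(\R)$. (The passage from Lipschitz $f$ to a general $f\in\mathcal{C}^1(\R)$ additionally uses that, almost surely, $\sup_{n}\sup_{0\le t\le T}\|Y^{(n)}(t)\|_{\mathrm{op}}<\infty$, so that only the values of $f'$ on a fixed bounded set are relevant.)

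For (a), the weak convergence $\mu_0^{(n)}\Rightarrow\mu_0$ makes $\{\mu_0^{(n)}\}$ tight, so given $\varepsilon>0$ there is $R_0$ with $\mu_0^{(n)}(\{|x|>R_0\})<\varepsilon$ for all $n$; then Weyl's perturbation inequality applied to $Y^{(n)}(t)=A^{(n)}+n^{-1/2}X^{(n)}(t)$, which displaces each eigenvalue of $A^{(n)}$ by at most $\|n^{-1/2}X^{(n)}(t)\|_{\mathrm{op}}$, gives $\mu_t^{(n)}(\{|x|>R\})\le\mu_0^{(n)}(\{|x|>R-\|n^{-1/2}X^{(n)}(t)\|_{\mathrm{op}}\})$, and a standard net/concentration argument for the operator norm of the scaled Gaussian matrix process (again using \textbf{(H2)} to control the increments of the scalar Gaussian fields $v\mapsto\langle n^{-1/2}X^{(n)}(t)v,v\rangle$) shows that $\sup_{n}\sup_{0\le t\le T}\|n^{-1/2}X^{(n)}(t)\|_{\mathrm{op}}<\infty$ almost surely, for every $T$. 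The step I expect to need the most care is the uniformity in $n$ of the moment bound in (b): although $\sum_{1\le i\le j\le n}|X_{i,j}(t)-X_{i,j}(s)|^2$ involves of order $n^2$ summands, the $n^{-1/2}$ normalization produced by Hoffman--Wielandt and Cauchy--Schwarz is precisely what turns it into an average, so that Jensen's inequality returns the $p$-th moment with the correct power $|t-s|^{p\gamma}$ and an $n$-free constant; and since $\gamma$ in \textbf{(H2)} can be arbitrarily small, a single second moment does not suffice, and one must exploit the full Gaussian moment hierarchy by taking $p>1/\gamma$.
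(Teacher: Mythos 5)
Your ingredient (b) is, in substance, the paper's own proof: Cauchy--Schwarz plus the Hoffman--Wielandt inequality to bound the increment of $\langle\mu^{(n)},f\rangle$ by $n^{-1/2}\|f'\|_\infty\|Y^{(n)}(t)-Y^{(n)}(s)\|_{\mathrm{HS}}$, cancellation of $A^{(n)}$ in \eqref{eq:scaledB}, the variance bound $R(t,t)-2R(s,t)+R(s,s)\le\kappa|t-s|^{\gamma}$ from \textbf{(H2)}, Gaussian moment equivalence, and Billingsley's moment criterion with an exponent making the time power exceed $1$; your bookkeeping ($\E[|\cdot|^{2p}]\le C_p\|f'\|_\infty^{2p}|t-s|^{p\gamma}$ with $p>1/\gamma$, via Jensen on the average of the $n(n+1)/2$ squared entries) is in fact the clean version of the exponents displayed in the paper. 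Where you genuinely differ is the reduction from tightness in $\mathcal{C}(\R_{+},\mathtt{Pr}(\R))$ to these scalar estimates: the paper simply invokes \cite{PaGaPe} (Proposition 1 there) to reduce to bounded $\mathcal{C}^{1}$ test functions with bounded derivative, whereas you add a compact-containment step (a) via Weyl's inequality against the spectrum of $A^{(n)}$ together with an almost sure bound on $\sup_{n}\sup_{0\le t\le T}\|n^{-1/2}X^{(n)}(t)\|_{\mathrm{op}}$, followed by Arzel\`a--Ascoli; this is a legitimate and arguably more self-contained route, but that operator-norm bound is only sketched (the chaining/Borel--Cantelli argument using \textbf{(H2)} for time increments and Gaussian concentration in $n$ would need to be written out), and it buys you an almost-sure statement where the paper's argument really delivers tightness of the laws. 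One caveat: your parenthetical claim that almost surely $\sup_{n}\sup_{0\le t\le T}\|Y^{(n)}(t)\|_{\mathrm{op}}<\infty$, used to pass to general $f\in\mathcal{C}^{1}(\R)$, does not follow from the standing assumptions, since weak convergence of $\mu_0^{(n)}$ gives no control on $\|A^{(n)}\|_{\mathrm{op}}$; fortunately this is not needed, because the reduction only requires bounded $f$ with bounded $f'$, exactly the class treated in the paper.
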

\begin{proof}
We follow the same argument as in \cite[Proposition~1]{PaGaPe}. It suffices to prove that for every bounded function $f\in \Cc^{1}(\R)$ with bounded derivative, the process $\{(\Ipm{\mu_{t}^{(n)},f},  t\geq0), n\ge 1\}$ is tight. To show this, we observe  that, since $\mu_{0}^{(n)}$ converges weakly, by  Billingsley's criteria (see \cite[Theorem~12.3]{Bi}), it is enough to show that there exist constants $C,p>0$ and $q>1$, independent of $n$, such that for every $0\leq t_{1}\leq t_{2}$, 
\begin{align}\label{ineq:Billingsleycondition}
\E\left[\Abs{\Ipn{\mu_{t_{1}}^{(n)},f}-\Ipn{\mu_{t_{2}}^{(n)},f}}^{p}\right]
  &\leq C\Abs{t_{2}-t_{1}}^{q}.
\end{align}
To prove \eqref{ineq:Billingsleycondition} we proceed as follows.  By the Cauchy-Schwarz inequality, 
\begin{align*}
\big|\Ipn{\mu_{t_{1}}^{(n)},f}-\Ipn{\mu_{t_{2}}^{(n)},f}\big|
  &=\left|\frac{1}{n}\sum_{i=1}^{n}f(\lambda_{i}^{(n)}(t_{2}))-f(\lambda_{i}^{(n)}(t_{1}))\right|\\
	&\leq \left|\frac{1}{n}\sum_{i=1}^{n}\Big(f(\lambda_{i}^{(n)}(t_{2}))-f(\lambda_{i}^{(n)}(t_{1}))\Big)^2\right|^{\frac{1}{2}}\\
	&\leq\Norm{f^{\prime}}_{\infty}\left|\sum_{i=1}^{n}\frac{1}{n}\Big(\lambda_{i}^{(n)}(t_{2})-\lambda_{i}^{(n)}(t_{1})\Big)^2\right|^{\frac{1}{2}}.
\end{align*}
\noindent Hence, using the Hoffman-Weilandt inequality (see \cite[Lemma~2.1.19]{AnGuZe}), as well as the symmetry of $Y^{(n)}(t)$, we deduce that for every $1\leq j\leq n$, 
\begin{align}\label{ineq:lambdadiff}
\big|\Ipn{\mu_{t_{1}}^{(n)},f}-\Ipn{\mu_{t_{2}}^{(n)},f}\big|
  &\leq \Norm{f^{\prime}}_{\infty}\left(\frac{1}{n}\sum_{i=1}^{n}\Big(\lambda_{i}^{(n)}(t_{2})-\lambda_{i}^{(n)}(t_{1})\Big)^2\right)^{\frac{1}{2}}\nonumber\\
	&\leq \Norm{f^{\prime}}_{\infty}\left(\frac{1}{n}Tr\Big(Y^{(n)}(t_{2})-Y^{(n)}(t_{1})\Big)^2\right)^{\frac{1}{2}}\nonumber\\
	&= \Norm{f^{\prime}}_{\infty}\left(\frac{1}{n}\sum_{i,k=1}^{n}\Big(Y_{i,k}^{(n)}(t_2)-Y_{i,k}^{(n)}(t_{1})\Big)^2\right)^\frac{1}{2}.
\end{align}
By condition \textbf{(H2)}, we have that  for all $\gamma>0$,
\[
\E\Big[\Big(Y_{i,k}^{(n)}(t_2)-Y_{i,k}^{(n)}(t_{1})\Big)^{2(\gamma+1)/\gamma}\Big]\leq2\frac{\kappa}{n}\Abs{t_{2}-t_{1}},
\]
for some constants $\kappa,\gamma>0$, and consequently, by \eqref{ineq:lambdadiff},
\begin{align}\label{ineq:lambdanorm}
\big\|\Ipn{\mu_{t_{1}}^{(n)},f}-\Ipn{\mu_{t_{2}}^{(n)},f}\big\|_{L^{\frac{2\gamma+2}{\gamma}}(\Omega)}
  &\leq\Norm{f^{\prime}}_{\infty}\E\left[\left(\frac{1}{n}\sum_{i,k=1}^{n}\left(Y_{i,k}^{(n)}(t_{2})-Y_{i,k}^{(n)}(t_{1})\right)^2\right)^\frac{\gamma+1}{\gamma}\right]^{\frac{\gamma}{2\gamma+2}}\nonumber\\
	&=\Norm{f^{\prime}}\left\|\frac{1}{n}\sum_{i,k=1}^{n}\left(Y_{i,k}^{(n)}(t_{2})-Y_{i,k}^{(n)}(t_{1})\right)^2\right\|_{L^{\frac{\gamma+1}{\gamma}}(\Omega)}^{\frac{1}{2}}\nonumber\\
  &\leq\Norm{f^{\prime}}\left(\frac{1}{n}\sum_{i,k=1}^{n}\Norm{\Big(Y_{i,k}^{(n)}(t_{2})-Y_{i,k}^{(n)}(t_{1})\Big)^2}_{L^{\frac{\gamma+1}{\gamma}}(\Omega)}\right)^{\frac{1}{2}}\nonumber\\
	&\leq C\Norm{f^{\prime}}_{\infty}\Abs{t_{2}-t_{1}}^{\frac{1}{2}},
\end{align}
for some universal constant $C>0$. The latter implies,
\begin{align*}
\E\left[\Abs{\Ipn{\mu_{t_{1}}^{(n)},f}-\Ipn{\mu_{t_{2}}^{(n)},f}}^{\frac{2\gamma+2}{\gamma}}\right]
  &\leq C\Norm{f^\prime}_{\infty}\Abs{t_{2}-t_{1}}^{1+\frac{1}{\gamma}}.
\end{align*}
Thus Billingsley's critera \eqref{ineq:Billingsleycondition} holds for $p=\frac{2\gamma+1}{\gamma}$ and $q=1+\frac{1}{\gamma}$. The proof is now complete.
\end{proof}
\section{Weak convergence of the empirical measure of eigenvalues}\label{sec:convergence}
This section is devoted to the proof of Theorem \ref{thm:main}. It is worth mentioning that, although some of the arguments we present are similar to \cite{PaGaPe}, our estimations are very different, mainly due to the fact that we do not have an analogue for Meyers' inequality for the extended Skorohod integral.

The following Proposition is useful for the proof of Theorem \ref{thm:main}. Its proof will be given at the end of this section.
\begin{Proposition}\label{Prop:aux}
For every $t>0$ fixed, the random variable 
\begin{align}\label{eq:Gdef}
G_{r}:=\frac{1}{n_{r}^{\frac{3}{2}}}\sum_{i=1}^{n_{r}}\sum_{k\leq h}\int_{0}^{t}f^{\prime}(\Phi_{i}^{n_{r}}(Y^{(n_{r})}(s)))\mathbf{1}_{G}(Y^{(n_{r})}(s))\frac{\partial \Phi_{i}^{n_{r}}}{\partial y_{k,l}}(Y^{(n_{r})}(s))\delta X_{k,h}(s),
\end{align}
converges to zero in $L^{2}(\Omega)$ as $n\to\infty$.
\end{Proposition}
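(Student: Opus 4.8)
The plan is to estimate the second moment of $G_r$ directly by exploiting the duality between the divergence operator and the Malliavin derivative, together with the crucial cancellation identities \eqref{eq:D2Phisum}. Write $G_r = \delta(v^{(n_r)})$ where $v^{(n_r)}_{k,h}(s) = n_r^{-3/2}\sum_{i=1}^{n_r} f'(\Phi_i^{n_r}(Y^{(n_r)}(s)))\mathbf{1}_G(Y^{(n_r)}(s))\frac{\partial\Phi_i^{n_r}}{\partial y_{k,h}}(Y^{(n_r)}(s))\mathbf{1}_{[0,t]}(s)$. The first step is to note, exactly as in Lemma \ref{lem:evolution}, that $G_r$ is the Skorokhod-integral term in the evolution equation \eqref{eq:evolution}, so that
\begin{align*}
G_r &= \Ipn{\mu_t^{(n_r)},f} - \Ipn{\mu_0^{(n_r)},f} - \frac{1}{2n_r^2}\sum_{i=1}^{n_r}\int_0^t f''(\Phi_i^{n_r}(Y^{(n_r)}(s)))\frac{\ud}{\ud s}R(s,s)\,\ud s \\
&\quad - \frac{1}{2}\int_0^t\int_{\R^2}\frac{f'(x)-f'(y)}{x-y}\frac{\ud}{\ud s}R(s,s)\,\mu_s^{(n_r)}(\ud x)\mu_s^{(n_r)}(\ud y)\,\ud s.
\end{align*}
This isolates $G_r$ as a difference of quantities we can control, but by itself this does not give convergence to zero; the heart of the matter is to show that the $L^2$-norm of $G_r$ carries an extra power of $1/n_r$.

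The main step is therefore a direct $L^2$ estimate. Using the duality relation \eqref{eq:Skorohodduality}, one has $\E[G_r^2] = \E[G_r\,\delta(v^{(n_r)})] = \E[\Ip{D G_r, v^{(n_r)}}_{\Hg^{d_r}}]$ with $d_r = n_r(n_r+1)/2$, provided $G_r \in \D^{1,2}$, which follows from the smoothness of $\Phi_i^{n_r}$ on $G$ and the moment bounds established in the proof of Lemma \ref{lem:evolution}. Computing $DG_r$ produces terms involving first and second derivatives of $\Phi_i^{n_r}$; when one pairs these against $v^{(n_r)}$ and sums over $k\le h$, the orthogonality-of-columns identities \eqref{eq:D2Phisum}, namely $\sum_{k\le h}(\partial\Phi_i^{n_r}/\partial y_{k,h})^2 = 2$ and $\sum_{k\le h}\partial^2\Phi_i^{n_r}/\partial y_{k,h}^2 = \sum_{j\ne i}2(\lambda_i^{n_r}-\lambda_j^{n_r})^{-1}$, collapse the double sums so that the dangerous individual eigenvector entries $U^{(n_r)}_{i,k}$ disappear. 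Counting powers: $v^{(n_r)}$ carries $n_r^{-3/2}$, $DG_r$ carries another $n_r^{-3/2}$, there are $n_r$ values of $i$ in each factor and $\Hg$-pairings contribute an integral $\int\int \frac{\partial R}{\partial s} \cdots$ whose size is controlled by \textbf{(H1)} uniformly in $n_r$; the combined bookkeeping, after using $\|f'\|_\infty, \|f''\|_\infty <\infty$ and the bound $\E[|\lambda_i^{(n_r)}(s)-\lambda_j^{(n_r)}(s)|^{-1}]<\infty$ from \eqref{eq:invmomentsphi} (together with the finiteness of the relevant time integrals via \eqref{altineq:dRss}), should yield $\E[G_r^2] \le C/n_r$ for a constant $C$ depending only on $f$, $T$ and $R$, whence the claim.

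The main obstacle I anticipate is the rigorous justification of the duality computation $\E[G_r^2]=\E[\Ip{DG_r,v^{(n_r)}}_{\Hg^{d_r}}]$ in the \emph{extended} domain $\mathrm{Dom}^*\delta$, since the extended divergence does not a priori satisfy the isometry/$L^2$-duality available in the classical case, and the integrands $\partial\Phi_i^{n_r}/\partial y_{k,h}$ are only defined off the measure-zero set $M_{A^{(n_r)}}$. The way around this is to mimic the localization argument in the proof of Theorem \ref{thm:chainrule}: test against smooth cylindrical $Y\in\Sf$, transfer derivatives via integration by parts against the Gaussian density $f_s$, keep track of the $\mathbf{1}_{\mathcal D}(X_s)$ factors (harmless Lebesgue-a.e. by Remark (i) after Theorem \ref{thm:chainrule}), and only at the end invoke the identities \eqref{eq:D2Phisum} to produce the gain of $1/n_r$. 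A secondary technical point is ensuring all the $s$-integrals of $R(s,s)^{-p(1-\delta)}|\frac{\ud}{\ud s}R(s,s)|$ and of $|\frac{\partial R}{\partial s}(s,t_i)|$ stay bounded uniformly in $n_r$; this is exactly what \textbf{(H1)}, through the monotonicity decomposition $0=T_1<\dots<T_L=T$ and the telescoping bound \eqref{altineq:dRss}, is designed to give, so no new hypothesis is needed.
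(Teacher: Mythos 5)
Your plan follows essentially the same route as the paper's proof: substitute the evolution identity of Lemma \ref{lem:evolution} for one factor of $G_r$, transfer the remaining Skorokhod integral onto Malliavin derivatives through the (extended) duality relation \eqref{eq:Skorohodduality}, and use the orthogonality bound $\big|\sum_{k\leq h}\frac{\partial \Phi_{i}^{n_{r}}}{\partial y_{k,h}}(Y^{(n_r)}(u))\frac{\partial \Phi_{j}^{n_{r}}}{\partial y_{k,h}}(Y^{(n_r)}(v))\big|\leq 2$ coming from \eqref{eq:D2Phisum}, together with \textbf{(H1)}, to collapse the sums over $k\leq h$ and gain the factor $1/n_r$. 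The paper merely organizes the same computation slightly differently---it applies duality term by term to the test functionals $f(\Phi_i^{n_r}(Y^{(n_r)}(t)))$ and $f''(\theta\lambda_i+(1-\theta)\lambda_j)$ (using $f\in\Cc^3_b$), and disposes of the $n_r^{-2}$-weighted $f''$ term by the deterministic bound $|G_r|\leq C(\Norm{f}_\infty+\Norm{f''}_\infty)$ rather than by duality---so your proposal is the same argument in substance.
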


\begin{proof}[Proof of Theorem \ref{thm:main}:]
By Lemma \ref{prop:tightness}, the sequence $\{\mu^{(n)}, n\geq1\}$ is tight, which implies that there exists a subsequence $\{\mu^{(n_r)}, r\geq1\}$ that converges in law, in the topology of $\mathcal{C}(\R_{+},\mathtt{Pr}(\R))$, to a measure valued stochastic process $\mu=(\mu_{t}, t\geq0)$. Then,  if we show that $\mu$ is deterministic, we conclude that $\{\mu^{(n)}, n\geq1\}$ converges in probability to $\mu$.

\noindent Using Proposition \ref{Prop:aux} together with relation \eqref{eq:evolution}, we deduce that the sequence of random variables
\begin{align}\label{aux_6}
\big\langle\mu_{t}^{(n_{r})},f\big\rangle
  -\big\langle\mu_{0}^{(n_r)},f\big\rangle
	-\frac{1}{2}\int_{0}^{t}\int_{\R^2}\frac{f^{\prime}(x)-f^{\prime}(y)}{x-y}\frac{\ud}{\ud s}R(s,s)\mu_{s}^{(n_{r})}(\ud x)\mu_{s}^{(n_r)}(\ud y)\ud s\notag\\
	-\frac{1}{2n_{r}^{2}}\sum_{i=1}^{n_{r}}\int_{0}^{t}f^{\prime\prime}(\Phi_{i}^{n_{r}}(Y^{(n_{r})}(s)))\frac{\ud}{\ud s}R(s,s)\ud s,
\end{align}
converges to zero in $L^{2}(\Omega)$. In particular, since  $\mu^{(n_{r})}$ converges in law to $\mu$, it implies  that $\mu$ satisfies the following measure-valued differential equation
\begin{align}\label{eq:dynamicsmu}
\Ipm{\mu_{t},f}
  &=\Ip{\mu_0,f}+\frac{1}{2}\int_{0}^{t}\int_{\R^{2}}\frac{f^{\prime}(x)-f^{\prime}(y)}{x-y}\frac{\ud}{\ud s}R(s,s)\mu_{s}(\ud x)\mu_{s}(\ud y)\ud s,
\end{align}
for each $t\geq0$ and $f\in \mathcal{C}_{b}^{2}(\R)$. Then we can conclude that any weak limit of a subsequence  $\{\mu^{(n_r)}, r\geq1\}$ should satisfy \eqref{eq:dynamicsmu}. 
We now proceed to prove that  $\mu$ is characterized by \eqref{eq:dynamicsmup}. In order to do so, we apply  \eqref{eq:dynamicsmu} to the sequence of functions 
$$f_{z}(x)
  =\frac{1}{x-z},\ \ \ \ \ z\in\Q^2\cap\C_{+},$$
we get 
\begin{align*}
\Ipm{\mu_{t},f_z}
  &=\Ip{\mu_0,f_z}+\frac{1}{2}\int_{0}^{t}\int_{\R^{2}}\frac{(x-z)+(y-z)}{(x-z)^2(y-z)^2}\frac{\ud}{\ud s}R(s,s)\mu_{s}(\ud x)\mu_{s}(\ud y)\ud s\nonumber\\
	&=\Ip{\mu_0,f_z}+\int_{0}^{t}\int_{\R^{2}}\frac{1}{(x-z)(y-z)^2}\frac{\ud}{\ud s}R(s,s)\mu_{s}(\ud x)\mu_{s}(\ud y)\ud s,
\end{align*}
where the last identity follows from the symmetry over the variables $x$ and $y$. 
Therefore, using a continuity argument, we get that the Cauchy-Stieltjes transform $G_{t}(z):=\int_\R\frac{1}{x-z}\mu_{t}(dz)$, defined in the domain $\C^{+}$, satisfies the integral equation 
\begin{align*}
G_{t}(z)
	&=\Ip{\mu_0,f}+\int_{0}^{t}\int_{\R^{2}}\frac{1}{(x-z)(y-z)^2}\frac{ \ud}{\ud s}R(s,s)\mu_{s}(\ud x)\mu_{s}(\ud y)\ud s\\
  &=\Ip{\mu_0,f}+\int_{0}^{t}\frac{\ud}{\ud s}R(s,s)G_{s}(z)\frac{\partial}{\partial z}G_{s}(z)\ud s.
\end{align*} 
In particular,
\begin{align*}
G_{t}(z)
  &=F_{R(t,t)}(z),
\end{align*}
where $F_{\tau}(z)$, for $z\in\C_{+}$, is the unique solution to the Burgers' equation
\begin{align*}
\frac{\partial}{\partial \tau}F_{\tau}(z)
  &=F_{\tau}(z)\frac{\partial}{\partial z}F_{\tau}(z),\nonumber\\
F_{0}(z)
  &=\Ip{\mu_0,f},
\end{align*}
which completes the proof.
\end{proof}

Finally we prove Proposition \ref{Prop:aux}.
\begin{proof}[Proof of Proposition \ref{Prop:aux}]
By relation \eqref{eq:evolution}, we have that 
\begin{align}\label{eq:Gdual}
G_{r}
  &=\Ipm{\mu_{t}^{(n_{r})},f}-\Ipm{\mu_{0}^{(n_{r})},f}-\frac{1}{2n_r^{2}}\sum_{i=1}^{n_r}\int_{0}^{t}f^{\prime\prime}(\Phi_{i}^{n}(Y^{(n_r)}(s)))\frac{\ud}{\ud s}R(s,s)\ud s\nonumber\\
	&\hspace{2cm}-\frac{1}{2}\int_{0}^{t}\int_{\R^2}\frac{f^{\prime}(x)-f^{\prime}(y)}{x-y}\frac{\ud}{\ud s}R(s,s)\mu_{s}^{(n_r)}(\ud x)\mu_{s}^{(n_r)}(\ud y)\ud s,
\end{align}
and consequently, we can write
\begin{align}\label{eq:EGdual}
\E\left[G_{r}^{2}\right]
  &=\E\left[(\langle\mu_{t}^{(n_{r})},f\rangle-\langle\mu_{0}^{(n_{r})},f\rangle)G_{r}\right]\\
  &-\frac{1}{2n_r^{2}}\sum_{i=1}^{n_r}\int_{0}^{t}
	\E\left[f^{\prime\prime}(\Phi_{i}^{n_r}(Y^{(n_{r})}(s)))G_{r}\right]\frac{\ud}{\ud s}R(s,s)\ud s\nonumber\\
	&-\frac{1}{2}\int_{0}^{t}\E\left[\int_{\R^2}\frac{f^{\prime}(x)-f^{\prime}(y)}{x-y}\frac{\ud}{\ud s}R(s,s)\mu_{s}^{(n_r)}(\ud x)\mu_{s}^{(n_r)}(\ud y)G_{r}\right]\ud s.\nonumber
\end{align}
Next we bound the terms appearing in the right hand side. Using relation \eqref{eq:Gdual}, as well as the fact that $f^{\prime}$ and $f^{\prime\prime}$ are bounded, we can easily show that for every $T>0$, there exists a constant $C>0$, only depending on $T$ and the properties of $R(s,t)$, such that for every $t\in[0,T]$,
\begin{align}\label{ineq:Grbounded}
\Abs{G_{r}}\leq C(\Norm{f}_{\infty}+\Norm{f^{\prime\prime}}_{\infty}).
\end{align}
From here we obtain
\begin{multline*}
\Abs{\frac{1}{n_r^{2}}\sum_{i=1}^{n_r}\int_{0}^{t}
	\E\left[f^{\prime\prime}(\Phi_{i}^{n_r}(Y^{(n_{r})}(s)))G_{r}\right]\frac{\ud}{\ud s}R(s,s)\ud s}\\
	  \hspace{-2cm}\leq \frac{C}{n_r}\Norm{f^{\prime\prime}}_{\infty}(\Norm{f}_{\infty}+\Norm{f^{\prime\prime}}_{\infty})\int_{0}^{T}\Abs{R(s,s)}\ud s,
\end{multline*}
and hence 
\begin{align}\label{conv:T1}
\lim_{r\rightarrow\infty}\Abs{\frac{1}{n_r^{2}}\sum_{i=1}^{n_r}\int_{0}^{t}
	\E\left[f^{\prime\prime}(\Phi_{i}^{n_r}(Y^{(n_{r})}(s)))G_{r}\right]\frac{\ud}{\ud s}R(s,s)\ud s}=0.
\end{align}
Next we notice, by the zero mean property of $G_{r}$,  that
\begin{align*}
\E\left[\Big(\langle\mu_{t}^{(n_{r})},f\rangle-\langle\mu_{0}^{(n_{r})},f\rangle \Big)G_{r}\right]
  =\E\left[\langle\mu_{t}^{(n_{r})},f\rangle G_{r}\right]
	=\frac{1}{n_{r}}\sum_{i=1}^{n_{r}}\E\left[f(\Phi_{i}(Y^{(n_{r})}(t)))G_{r}\right]. 
\end{align*}
Consequently,  using \eqref{eq:Gdef} and the duality property \eqref{eq:Skorohodduality},  we get 
\begin{multline}\label{eq:mufGr}
\E\Big[\Big(\langle\mu_{t}^{(n_{r})},f\rangle-\langle\mu_{0}^{(n_{r})},f\rangle\Big)G_{r}\Big]\\
  \begin{aligned}
	&=\frac{1}{n_{r}^{\frac{5}{2}}}\sum_{i,j=1}^{n_{r}}\E\Bigg[\Bigg\langle Df(\Phi_{i}^{n_{r}}(Y^{(n_{r})}(t))), \sum_{k\leq h}f^{\prime}(\Phi_{j}^{n_{r}}(Y^{(n_{r})}(s)))\\
	&\hspace{5cm}\times\mathbf{1}_{G}(Y^{(n_{r})}(s))\frac{\partial \Phi_{j}^{n_{r}}}{\partial y_{k,l}}(Y^{(n_{r})}(s))1_{[0,t]}(s)\Bigg\rangle\Bigg]\\
	&=\frac{1}{n_{r}^{3}}\sum_{i,j=1}^{n_{r}}\sum_{k\leq h}\E\bigg[f^{\prime}(\Phi_{i}^{n_{r}}(Y^{(n_{r})}(t)))\frac{\partial \Phi_{j}^{n_{r}}}{\partial y_{k,l}}(Y^{(n_{r})}(t))\\
	&\hspace{1cm}\times \int_{0}^{t}f^{\prime}(\Phi_{j}^{n_{r}}(Y^{(n_{r})}(s)))\mathbf{1}_{G}(Y^{(n_{r})}(s))\frac{\partial \Phi_{j}^{n_{r}}}{\partial y_{k,l}}(Y^{(n_{r})}(s))\frac{\partial R}{\partial s}(s,t)\ud s\bigg].
  \end{aligned}
\end{multline}
  On the other hand, by the Cauchy-Schwarz inequality and the relation \eqref{eq:D2Phisum}, we have that for every $u,v>0$ and $i,j\in \N$,
\begin{multline}\label{eq:cauchyphi}
\Abs{\sum_{k\leq h}\left(\frac{\partial \Phi_{i}^{n_{r}}}{\partial y_{k,l}}(Y^{(n_{r})}(u))\right)\left(\frac{\partial \Phi_{j}^{n_{r}}}{\partial y_{k,l}}(Y^{(n_{r})}(v))\right)}\\
\leq \left(\sum_{k\leq h}\left(\frac{\partial \Phi_{i}^{n_{r}}}{\partial y_{k,l}}(Y^{(n_{r})}(u))\right)^2\right)^{\frac{1}{2}}\left(\sum_{k\leq h}\left(\frac{\partial \Phi_{j}^{n_{r}}}{\partial y_{k,l}}(Y^{(n_{r})}(v))\right)^2\right)^{\frac{1}{2}}=2. 
\end{multline}
Hence, from  \eqref{eq:mufGr} we conclude that 
\begin{align*}
\E\left[\Big(\langle\mu_{t}^{(n_{r})},f\rangle-\langle\mu_{0}^{(n_{r})},f\rangle\Big)G_{r}\right]
	&\leq\frac{2\Norm{f^{\prime}}_{\infty}^2}{n_{r}}\int_{0}^{T}\Abs{\frac{\partial R}{\partial s}(s,t)}\ud s,
\end{align*}
and consequently, 
\begin{align}\label{conv:T2}
\lim_{r\rightarrow\infty}\E\left[\Big(\langle\mu_{t}^{(n_{r})},f\rangle-\langle\mu_{0}^{(n_{r})},f\rangle\Big)G_{r}\right]=0.
\end{align}
 Finally, we handle the third term in \eqref{eq:EGdual}. Using the following identity 
\begin{align*}
\frac{f^{\prime}(x)-f^{\prime}(y)}{x-y}
  &=\int_{0}^1f^{\prime\prime}(\theta x+(1-\theta)y)\ud \theta,
\end{align*}
we deduce that for every $s>0$,
\begin{align}\label{eq:fprimequotiont}
\E\left[\int_{\R^2}\frac{f^{\prime}(x)-f^{\prime}(y)}{x-y}\mu_{s}^{(n_r)}(\ud x)\mu_{s}^{(n_r)}(\ud y)G_{r}\right]
   &=  \frac{1}{n_{r}^2}\sum_{i,j=1}^{n}\int_{0}^{1}\E\left[f^{\prime\prime}(I_{i,j}^{s,r}(\theta))G_{r}\right]\ud \theta,
\end{align}
where 
\begin{align*}
I_{i,j}^{s,r}(\theta)
  &:=\theta\Phi_{i}^{n_{r}}(Y^{(n_{r})}(s))+(1-\theta)\Phi_{j}^{n_{r}}(Y^{(n_{r})}(s)).
\end{align*}
The term in the right hand side of \eqref{eq:fprimequotiont} can be estimated as follows. Define the processes 
\begin{align*}
\Lambda_{k,h}^{l,r}(u)
  &:=\frac{\partial \Phi_{l}^{n_{r}}}{\partial y_{k,h}}(Y^{(n_{r})}(u)).
\end{align*}
We can easily show that 
\begin{align*}
Df^{\prime\prime}(I_{i,j}^{s,r}(\theta))
  &=\frac{\theta}{\sqrt{n}}f^{\prime\prime\prime}(I_{i,j}^{s,r}(\theta))\Lambda_{k,h}^{i,r}(s)\mathbf{1}_{[0,s]}+\frac{1-\theta}{\sqrt{n}}f^{(\prime\prime\prime)}(I_{i,j}^{s,r}(\theta))\Lambda_{k,h}^{j,r}(s)\mathbf{1}_{[0,s]}.
\end{align*}
Then, using the duality relation of the Skorohod integral, as well as the expression \eqref{eq:Gdef}, we obtain
\begin{multline*}
\E\left[f^{\prime\prime}(I_{i,j}^{s,r}(\theta))G_{r}\right]\\
\begin{aligned}
  &=\frac{1}{n_{r}^{2}}\sum_{l=1}^{n_{r}}\sum_{k\leq h}\E\bigg[\int_{0}^{t}\theta f^{\prime\prime\prime}(I_{i,j}^{s,r}(\theta))f^{\prime}(\Phi_{i}^{n_{r}}(Y^{(n_{r})}(s)))\Lambda_{k,h}^{i,r}(s)\Lambda_{k,h}^{l,r}(u)\frac{\partial R}{\partial u}(u,s)\textup{d}u\bigg]\\
	&+\frac{1}{n_{r}^{2}}\sum_{l=1}^{n_{r}}\sum_{k\leq h}\E\bigg[\int_{0}^{t}(1-\theta)f^{\prime\prime\prime}(I_{i,j}^{s,r}(\theta))f^{\prime}(\Phi_{i}^{n_{r}}(Y^{(n_{r})}(s)))\Lambda_{k,h}^{j,r}(s)\Lambda_{k,h}^{l,r}(u)\frac{\partial R}{\partial u}(u,s)\textup{d}u\bigg],
\end{aligned}
\end{multline*}
which, by the boundedness of $f^{\prime\prime\prime}$ and $f^{\prime}$, implies that there exists a constant $C>0$, only depending on $f$, such that
\begin{align}\label{eq:firstboundintf3}
\Abs{\E\left[f^{\prime\prime}(I_{i,j}^{s,r}(\theta))G_{r}\right]}
  &\leq \frac{C}{n_{r}^{2}}\E\bigg[\int_{0}^{t}\sum_{l=1}^{n_{r}}\bigg|\sum_{k\leq h}\Lambda_{k,h}^{i,r}(s)\Lambda_{k,h}^{l,r}(u)\bigg|\Abs{\frac{\partial R}{\partial u}(u,s)}\ud u\bigg]\nonumber\\
	&+\frac{C}{n_{r}^{2}}\E\bigg[\int_{0}^{t}\sum_{l=1}^{n_{r}}\bigg|\sum_{k\leq h}\Lambda_{k,h}^{j,r}(s)\Lambda_{k,h}^{l,r}(u)\bigg|\Abs{\frac{\partial R}{\partial u}(u,s)}\ud u\bigg].
\end{align}
Using \eqref{eq:cauchyphi} and \eqref{eq:firstboundintf3}, we get
\begin{align*}
\Abs{\E\left[f^{\prime\prime}(I_{i,j}^{s,r}(\theta))G_{r}\right]}
  &\leq \frac{4C}{n_{r}}\int_{0}^{t}\Abs{\frac{\partial R}{\partial u}(u,s)}\ud u
	\leq \frac{4Ct^{1-\frac{1}{\alpha}}}{n_{r}}\bigg(\int_{0}^{t}\Abs{\frac{\partial R}{\partial u}(u,s)}^{\alpha}\ud u\bigg)^{\frac{1}{\alpha}}\notag\\
	&\leq \frac{4Ct^{1-\frac{1}{\alpha}}}{n_{r}}\sup_{s\in[0,t]}\bigg(\int_{0}^{t}\Abs{\frac{\partial R}{\partial u}(u,s)}^{\alpha}\ud u\bigg)^{\frac{1}{\alpha}}.
\end{align*}
Using the previous identity in \eqref{eq:fprimequotiont}, we deduce that there exists a constant $C_1>0$, such that 
\begin{align}\label{conv:T3}
\Abs{\E\left[\int_{\R^2}\mathbf{1}_{\{x\neq y\}}\frac{f^{\prime}(x)-f^{\prime}(y)}{x-y}\mu_{s}^{(n_r)}(\ud x)\mu_{s}^{(n_r)}(\ud y)G_{r}\right]}
  &\leq \frac{C_1}{n_r}.
\end{align}
From \eqref{eq:EGdual}, \eqref{conv:T1}, \eqref{conv:T2} and \eqref{conv:T3}, we conclude that $G_{r}$ converges to zero in $L^{2}(\Omega)$,  as required.
\end{proof}
\noindent\textbf{Acknowledgements:} We would like to express our sincere gratitude   to Prof. David Nualart for his helpful observations and suggestions on the first draft of this paper. This research was supported by the Royal Society and CONACYT-Mexico.

\end{document}